\documentclass[11pt]{amsart}

\usepackage{amsmath}
\usepackage{amssymb,amscd}
\usepackage{mathrsfs}       
\usepackage{srcltx}
\usepackage{subcaption}

\usepackage{setspace} 

\usepackage[pdftex]{graphicx}

\usepackage{color}
\usepackage{datetime}
\usepackage{graphicx}
\usepackage{tikz}
\usepackage{tikz-cd}
\usepackage{pgfmath}
\usetikzlibrary{snakes}
\usetikzlibrary{matrix,quotes,arrows.meta}
\usetikzlibrary{arrows,matrix,positioning}
\usetikzlibrary{fit}
\usetikzlibrary{patterns}
\usepackage{arydshln}
\input xy \xyoption{all}

\graphicspath{{images/}}

\addtolength{\textwidth}{2.4cm} \addtolength{\hoffset}{-1.2cm}
\addtolength{\textheight}{2.4cm} \addtolength{\topmargin}{-1.4cm}

\numberwithin{equation}{section}

\newtheorem{theorem}{Theorem}[section]

\newtheorem{corollary}[theorem]{Corollary}

\newtheorem{lemma}[theorem]{Lemma}
\newtheorem{prop}[theorem]{Proposition}
\newtheorem{remark}[theorem]{Remark}

\theoremstyle{definition}
\newtheorem{defn}[theorem]{Definition}

\newtheorem{example}[theorem]{Example}
\newtheorem{ques}[theorem]{Question}

\def \begineq{\begin{equation}}
\def \endeq{\end{equation}}

\def \bb{\mathbb}

\def \mc{\mathcal}

\def\oO{\mathcal O}    


\def \CC{{\bb{C}}}

\def \RR{{\bb{R}}}
\def \TT{{\bb{T}}}

\def \({\left(}
\def \){\right)}
\def \<{\langle}
\def \>{\rangle}
\def \bar{\overline}
\def \deg{\mathrm{deg}}

\def\oO{\mathcal O}    
\def\od{\mc{OD}}    

\def \to{\rightarrow}
\def \act{\curvearrowright}

\newcommand{\eqcont}[1]{%
	\mathrel{\ooalign{\hbox{\scalebox{1.5}[1]{$\rhd$}}\cr%
	\kern-0.1ex\raise1.15ex\hbox{\scalebox{1.5}[1]{$\sim$}}\cr%
	\kern1.5ex\raise-1.2ex\hbox{\scalebox{0.7}{#1}}\cr%
	}}}
%

\begin{document}

\title{Higher equivariant and invariant topological complexity}
\author[M. Bayeh]{Marzieh Bayeh}

\address{Department of Mathematics and Statistics, University of Dalhausie, Halifax, Canada.}

\email{marzieh.bayeh@dal.ca}

\author[S. Sarkar]{Soumen Sarkar}

\address{Department of Mathematics, Indian Institute of Technology Madras, Chennai, 600036, India.}

\email{soumen@iitm.ac.in}

\subjclass[2010]{55M30, 55M99}

\keywords{orbit class, orbit diagram, group action, equivariant topological complexity, invariant topological complexity, (equivariant) LS-category, moment angle complex}

\abstract 
In this paper we introduce the concepts of higher equivariant and invariant
topological complexity; and study their properties. Then we compare them with
equivariant LS-category. We give lower and upper bounds for these new invariants.
We compute some of these invariants for moment angle complexes.

\endabstract

\maketitle

%


\section{Introduction}\label{introsco2}

To estimate the complexity of a configuration space Farber \cite{Far2} introduced
the notion of topological complexity. This invariant of a topological space $X$,
denoted by $TC(X)$, is the least number of open sets that form a covering for
$X \times X$ in which each open set admits a section to the free path fibration 
$$\begin{array}{c}
\pi : X^I \to X \times X  \quad \mbox{defined by} \quad
\pi(\gamma)=\big(\gamma(0),\gamma(1) \big).
\end{array}$$
In particular, $TC(X)$ is the Schwarz genus \cite{Sv} of the map $\pi: X^I \to X \times X$. 

In \cite{Rud} Ruddyak introduced a series of invariants $\{TC_n(X)\}$, called the higher topological complexity. The notion of higher topological complexity is a generalization of topological complexity, as $TC_n(X)$ is related to a motion planning algorithm with $n$ points as input (in addition to the initial and terminal points, some intermediate points are given as well).

When the space $X$ admits an action of a topological group $G$ (for example having a symmetry on the mechanical system or its configuration space), then it is worth considering a motion planning algorithm that is compatible with the action. 
This idea leads us to the equivariant versions of topological complexity. 
Lubawski and Marzantowicz \cite{LM} studied the importance of invariant topological complexity 
when there is a group action on a mechanical system or on the configuration space, and discussed a natural way of thinking about equivariant version of topological complexity (for more details see the introduction of \cite{LM}). 

In this paper we introduce two equivariant versions of the higher topological complexity and study some of their properties.

The first concept is called the higher equivariant topological complexity and it is a generalization of the equivariant topological complexity, $TC_G(X)$, which is introduced by Colman and Grant \cite{CG}. For a $G$-space $X$, Colman and Grant considered the diagonal action of $G$ on $X\times X$. 

The second concept is called the higher invariant topological complexity and it is a generalization of the invariant topological complexity, $TC^G(X)$, which is introduced by Lubawski and Marzantowicz \cite{LM}. For the invariant topological complexity, the product space $X\times X$ has been considered with the
product action of $G \times G$. In \cite{BS2}, the authors compared the equivariant topological complexity with the invariant topological complexity, using the concept of orbit class and orbit diagram.

The paper is organized as follows. 
In Section \ref{Sec:od}, we recall some basic notions
about orbit class, orbit diagram and equivariant LS-category associated to
a group action. We also show the product formula for the product action
under some mild hypothesis.
In Section \ref{Sec:heqtc}, we introduce the higher equivariant topological complexity. 
We also give some lower and upper bounds for this invariant. For some particular cases we show that the equivariant LS-category gives an upper bound for the higher equivariant topological complexity.
In Section \ref{Sec:invTC}, we introduce the higher invariant topological complexity. 
We study some properties of this invariant. 
In particular, we show that if the space has more than one minimal orbit class then this invariant is infinity.
Finally, in the last section we study the equivariant topological complexity of the moment angle complex.



\section{Orbit class and equivariant LS-category}\label{Sec:od}
In this section we recall some results about orbit class, orbit diagram and
equivariant LS-category associated to a group action following \cite{BS2} and \cite{LM}. 
Let $G$ be a compact topological group, acting continuously on a Hausdorff topological 
space $X$. Through out this paper these are the assumptions. In this case $X$ is called a
$G$-space. For each $x \in X$ the orbit of $x$ is denoted by $\oO(x)$, and the isotropy group or stabilizer of $x$ is denoted by $ G_x$.
The orbit space which is equipped with the quotient topology is denoted by $X/G$. 
The fixed point set of $X$ is denoted by $X^G $. 
Here, the fixed point set $X^G$ is endowed with the subspace topology. We denote the closed interval $[0,1]$ in $\RR$ by $I$.

\begin{defn}
Let $X$ be a $G$-space.
\begin{enumerate}
\item A subset $U$ of $X$ is called $G$-invariant subset, if $U$ is stable under the
$G$-action; i.e. $GU\subseteq U$.
\item Let $U$ be a $G$-invariant subset of $X$. The homotopy $H:U \times I \to X$ is called $G$-homotopy
if for any $g \in G$, $x \in U$, and $t \in I$, we have $gH(x,t) = H(gx,t)$.
\end{enumerate}
\end{defn}

\begin{defn}\label{G-cont}
Let $U$ and $A$ be $G$-invariant subsets of a $G$-space $X$. 
We say $U$ is $G$-contractible to $A$ and denote it by 
$$ U \eqcont{G} A  ,$$
if there exists a $G$-homotopy $H: U \times I \to X$ such that 
$H_0$ is the inclusion of $U$ in $X$, and we have 
$H_1(U) \subseteq A$.

If $A$ is an orbit, $U$ is called a $G$-categorical subset of $X$.
\end{defn}
 As a special case of Definition \ref{G-cont}, if $U$ and $A$ are orbits, $U=\oO(x)$
and $A=\oO(y)$, then a $G$-homotopy $H:\oO(x) \eqcont{G} \oO(y)$ is called a
 $G$-path from $\oO(x)$ to $\oO(y)$ \cite[Definition 3.1]{HT}. Note that
 in this case $G_x \leq G_{x_t}$, where $x_t = H(x,t)$. In particular there exists
 $g_0 \in G$ such that $G_x \leq G_{g_0y} = g_{0}G_{y}g_0^{-1}$ (see \cite[Lemma 3.2]{HT}). 

\begin{defn}
Let $X$ be a $G$-space. We say $\oO(x) \sim \oO(y)$ 
if there exist two $G$-paths ${H: \oO(x)\eqcont{G} \oO(y)}$ and $ H': \oO(y) \eqcont{G} \oO(x)$. 
\end{defn}

Note that $\sim$ is an equivalence relation on the set of orbits in $X$ (see \cite{Bay}). We denote the equivalence
class of $\oO(x)$ by $[\oO(x)]$ and call it the orbit class corresponding to $x$.

\begin{defn}
Let $X$ be a $G$-space. On the set of all orbit classes we define the relation $\geq$ as follows:
$$[\oO(y)] \geq [\oO(x)] \quad \text{if} \quad \oO(y) \eqcont{G} \oO(x) .$$
\end{defn}

Here, the relation $\geq$ is independent of the choice of the representative
of the equivalence classes (see \cite{Bay}). Therefore $\geq$ defines a partial order on the
set of orbit classes in $X$. We call the Hasse diagram corresponding to this 
poset an orbit diagram of $X$ and denote it by $\od(G \act X)$. 
See \cite{BS2} and \cite{Bay} for some 
examples of orbit classes and orbit diagrams.


\begin{defn}\label{agls-cat}
Given a $G$-invariant subset $A$ of a $G$-space $X$, an $A\mbox{-}G$-categorical
covering of $X$ is a set of $G$-invariant subsets that form a covering for $X$ and each of which is $G$-contractible to $A$. 
The least value of $n$ for which a $A\mbox{-}G$-categorical
covering $\big\{ U_1 , ... , U_n \big\}$
exists, is called the $A\mbox{-}G$-LS-category of $X$, denoted by $_Acat_G(X)$.
If no such covering exist, we write $_Acat_G(X) = \infty$. 
\end{defn}
This definition is similar to the one in \cite{LM}, but there $A$ is assumed
to be a closed invariant subset of $Y$. 
Note that if the action of $G$ is trivial then $_{pt}cat_G(X)$ is the
classical LS-category $cat(X)$. 
Also if $A $ and $ B$ are two $G$-invariant subsets of $Y$ with $A \subseteq B$, 
then we have 
$$  _B cat_G(Y) \;\; \leq \;\; _A cat_G(Y).$$
Together with this, several properties of $_Acat_G(X)$ have been studied in \cite{LM}.

\begin{defn}\label{equils-cat}
Let $X$ be a $G$-space. A $G$-categorical subset of $X$ is a $G$-invariant subset which is $G$-contractible to an orbit in $X$.
\end{defn}
\begin{defn}\label{equils-cat}
For a $G$-space $X$, a $G$-categorical
covering is a set of $G$-invariant subsets that form a covering 
for $X$ and each of which is a $G$-categorical subset. 
The least value of $n$ for which a $G$-categorical
covering $\big\{ U_1 , ... , U_n \big\}$
exists, is called the equivariant category of $X$, denoted by $cat_G(X)$. 
If no such covering exist, we write $cat_G(X) = \infty$.
\end{defn}

 Although, Definitions \ref{agls-cat} and \ref{equils-cat} may look similar, but
 they are indeed different. For example, $_Acat_G(X)$ satisfies the product formula
(see \cite[Theorem 2.14]{LM}), but $cat_G(X)$ does not in general (see
 \cite[Example 6.4]{BS}).
 

\begin{defn}
Let $G$ be a topological group acting on a topological space $X$. The sequence 
$$ \emptyset=A_0 \varsubsetneq A_1 \varsubsetneq A_2 \varsubsetneq \cdots \varsubsetneq A_n = X $$ 
of open sets in $X$ is called $G$-categorical sequence  of length $n$ if
\begin{itemize}
\item each $A_i$ is $G$-invariant, and
\item for each $1 \leq i \leq n$, there exists a $G$-categorical subset $U_i$ of $X$, such that
$$ A_i - A_{i-1} \subset U_i .$$
\end{itemize}
A $G$-categorical sequence of length $n$ is called minimal if there exists no
 $G$-categorical sequence with smaller length in $X$.
\end{defn}

We recall that in a $G$-space $X$ if $X^K$ is path connected for any closed subgroup $K$ of $G$,
then $X$ is called a $G$-connected space.


\begin{prop}\label{GcatProd}
Let $X_k$ be a $G_k$-connected space for $k=1, 2$ such that $X_1 \times X_2$ 
is completely normal. If $X_k^{G_k} \neq \emptyset$ for $k=1, 2$, then 
$$ cat_{G_1 \times G_2} (X_1\times X_2) \leq cat_{G_1}(X_1) + cat_{G_2}(X_2)-1, $$
where $X_1 \times X_2$ is given the product $G_1 \times G_2$-action.
\end{prop}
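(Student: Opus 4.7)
The plan is in two stages: first reduce each $cat_{G_k}(X_k)$ to the relative invariant ${}_{\{x_k^\ast\}}cat_{G_k}(X_k)$ at a chosen fixed point (Definition \ref{agls-cat}), then invoke the product inequality for ${}_A cat_G$ already proved in \cite[Theorem 2.14]{LM}.

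For the reduction I would pick fixed points $x_\ast\in X_1^{G_1}$ and $y_\ast\in X_2^{G_2}$ (possible by the nonemptiness hypothesis) and prove ${}_{\{x_\ast\}}cat_{G_1}(X_1)=cat_{G_1}(X_1)$, and similarly for $X_2$. The inequality $cat_{G_1}(X_1)\leq{}_{\{x_\ast\}}cat_{G_1}(X_1)$ is immediate, since $\{x_\ast\}$ is itself an orbit. For the reverse, take a $G_1$-categorical set $U\subseteq X_1$ with $U\eqcont{G_1}\oO(z)$ and set $H=G_z$. Both $z$ and $x_\ast\in X_1^{G_1}\subseteq X_1^{H}$ lie in $X_1^{H}$, which is path connected by the $G_1$-connectedness of $X_1$. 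Any path $\gamma$ from $z$ to $x_\ast$ in $X_1^{H}$ spreads to a well-defined $G_1$-homotopy $\oO(z)\eqcont{G_1}\{x_\ast\}$ via $(gz,t)\mapsto g\gamma(t)$ (well-defined because every element of $H$ fixes each $\gamma(t)$); concatenating this with the original contraction upgrades $U$ to a $G_1$-invariant subset that is $G_1$-contractible to the fixed-point orbit $\{x_\ast\}$.

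With this reduction in hand I would apply \cite[Theorem 2.14]{LM} to the $(G_1\times G_2)$-invariant point $(x_\ast,y_\ast)\in X_1\times X_2$, using complete normality of $X_1\times X_2$, to obtain
$$
{}_{\{(x_\ast,y_\ast)\}}cat_{G_1\times G_2}(X_1\times X_2)\ \leq\ {}_{\{x_\ast\}}cat_{G_1}(X_1)\,+\,{}_{\{y_\ast\}}cat_{G_2}(X_2)\,-\,1.
$$
Since $\{(x_\ast,y_\ast)\}$ is a single $(G_1\times G_2)$-orbit, a cover whose pieces are $(G_1\times G_2)$-contractible to $\{(x_\ast,y_\ast)\}$ is in particular a $(G_1\times G_2)$-categorical cover, so $cat_{G_1\times G_2}(X_1\times X_2)\leq{}_{\{(x_\ast,y_\ast)\}}cat_{G_1\times G_2}(X_1\times X_2)$. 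Chaining these three inequalities with the equalities from the previous paragraph yields the stated bound.

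The main obstacle, as I see it, is the equality ${}_{\{x_\ast\}}cat_{G_1}(X_1)=cat_{G_1}(X_1)$. One has no a priori control over which isotropy subgroup $H=G_z$ shows up as a contraction target in a minimal $G_1$-categorical cover, so the argument must succeed uniformly across every closed subgroup of $G_1$. This is exactly where the full strength of $G_1$-connectedness is needed --- path connectedness of $X_1^K$ for \emph{every} closed $K\leq G_1$, not just for $K=G_1$ --- since without it the ``spread a fixed-point path through the orbit'' construction can fail to produce the required $G_1$-path $\oO(z)\eqcont{G_1}\{x_\ast\}$ for concatenation.
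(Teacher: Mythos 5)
Your proof is correct, and it takes a genuinely different route from the one the paper indicates. The paper's own proof is a pointer to the categorical-sequence argument of \cite[Theorem 1.37]{CLOT} as adapted in \cite[Theorem 2.23]{BS}: one interleaves minimal $G_k$-categorical sequences for $X_1$ and $X_2$, uses complete normality to separate the resulting disjoint pieces of $X_1\times X_2$ at each level, and uses $G_k$-connectedness together with the fixed points to contract all pieces of a given level to one common orbit. You instead isolate that second ingredient as a standalone reduction, ${}_{\{x_\ast\}}cat_{G_k}(X_k)=cat_{G_k}(X_k)$, and delegate the first ingredient entirely to the already-established product formula \cite[Theorem 2.14]{LM} for the relative invariant ${}_Acat_G$, finishing with the observation that a cover whose members are $(G_1\times G_2)$-contractible to the single orbit $\{(x_\ast,y_\ast)\}$ is in particular a $(G_1\times G_2)$-categorical cover. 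This is more modular and makes explicit exactly where $G$-connectedness and $X_k^{G_k}\neq\emptyset$ enter (and hence why the product inequality fails for $cat_G$ in general, cf.\ \cite[Example 6.4]{BS}), at the cost of importing \cite[Theorem 2.14]{LM} as a black box rather than giving a self-contained construction. Two small points to make your reduction airtight: the spread homotopy $(gz,t)\mapsto g\gamma(t)$ is continuous because $G_1$ is compact and $X_1$ Hausdorff, so $\oO(z)\cong G_1/G_z$ and the map descends from the continuous map $G_1\times I\to X_1$; and $\{x_\ast\}$ is closed in the Hausdorff space $X_1$, so it is an admissible target for \cite[Theorem 2.14]{LM}, where $A$ is required to be a closed invariant subset.
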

\begin{proof}
The idea of proof is similar to the proof for classical category \cite[Theorem 1.37]{CLOT} 
by using a minimal $G$-categorical sequence, 
and the proof is analogous to the proof of \cite[Theorem 2.23]{BS}.

\end{proof}


\begin{lemma}\label{cat_gcat}
If $X$ is a $G$-space with one orbit type, then $cat_G(X) = cat(X/G)$.
\end{lemma}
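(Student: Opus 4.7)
The plan is to prove the two inequalities $cat_G(X) \leq cat(X/G)$ and $cat(X/G) \leq cat_G(X)$ separately, relating $G$-categorical coverings of $X$ to categorical coverings of $X/G$ via the orbit projection $p\colon X \to X/G$.

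The starting observation, which I will use throughout, is that under the single orbit type assumption the slice theorem for a compact group action guarantees that $p$ is a locally trivial fiber bundle with fiber $G/H$, where $H$ represents the common isotropy type (up to conjugacy). In particular $p$ is an open map and satisfies the $G$-equivariant homotopy lifting property.

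For the inequality $cat(X/G) \leq cat_G(X)$, I would take a $G$-categorical covering $\{U_1,\dots,U_n\}$ of $X$ and consider its image $V_i = p(U_i)$. Each $V_i$ is open in $X/G$ since $p$ is an open map, and together they cover $X/G$. A $G$-homotopy $H_i\colon U_i \times I \to X$ witnessing $U_i \eqcont{G} \oO(x_i)$ is $G$-equivariant at each time, hence descends via $p$ to a well-defined homotopy $V_i \times I \to X/G$ from the inclusion to the constant map at $p(x_i)$; thus each $V_i$ is categorical in $X/G$.

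Conversely, for $cat_G(X) \leq cat(X/G)$, I would start with a categorical covering $\{V_1,\dots,V_n\}$ of $X/G$ together with contractions $h_i\colon V_i \times I \to X/G$ onto points $y_i$, form the $G$-invariant open sets $U_i = p^{-1}(V_i)$, and use the equivariant homotopy lifting property of the bundle $p$ applied to the inclusion $U_i \hookrightarrow X$ to lift $h_i \circ (p \times \mathrm{id})$ to a $G$-homotopy $U_i \times I \to X$ that starts from the inclusion and ends in the fiber $p^{-1}(y_i) = \oO(x)$ for some $x \in X$. This last step is the main obstacle, since it is the place where the single orbit type hypothesis is truly essential: without it, $p$ need not be a fiber bundle and the equivariant lift of the contraction may fail to exist.
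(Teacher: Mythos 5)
Your proposal is correct and follows essentially the same route as the paper: the pushforward of a $G$-categorical cover along the open orbit map gives the easy inequality, and the pullback of a categorical cover of $X/G$ combined with an equivariant lift of the contraction gives the other. The only difference is in how the key lifting step is justified --- the paper cites the Covering Homotopy Theorem of Palais (which applies because, with one orbit type, every homotopy in $X/G$ automatically preserves the orbit structure), whereas you appeal to the slice theorem and an asserted $G$-equivariant homotopy lifting property of the bundle $p$; the Palais theorem is exactly the statement that supplies that equivariant lifting property, so you should cite it (or prove the equivariant HLP from local triviality) rather than leave it as an assertion.
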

\begin{proof}
Let $\mathfrak{q} : X \to X/G$ be the orbit map and $U_1, \ldots, U_n$ form a
categorical open cover for $X/G$. Then there is a homotopy 
$H_i : U_i \times I \to X/G$ starting at $U_i$ and contracting to a point in $X/G$. 
By
the hypothesis, the homotopy $H_i$ preserve the orbit structure. So the
Covering Homotopy Theorem of Palais (\cite[II.7.3]{Bre}) implies that there is a
$G$-homotopy $\bar{H} : \mathfrak{q}^{-1}(U_i) \to X$ starting
at $\mathfrak{q}^{-1}(U_i)$ and contracting to an orbit in $X$. 
\begin{center}
\begin{tikzpicture}
\matrix (m) [matrix of math nodes, row sep=3em, column sep=5em]
{ \mathfrak{q}^{-1}(U_i) &  U_i \\
  X & X/G \\};
\path[->, line width=1pt]
(m-1-1) edge node[auto] {$\mathfrak{q}$} (m-1-2)
(m-1-1) edge node[auto] {$ $} (m-2-1)
(m-1-2) edge node[auto] {$ $} (m-2-2)
(m-2-1) edge node[auto] {$\mathfrak{q}$} (m-2-2);
\end{tikzpicture}
\end{center}
Other inequality is clear. This proves the lemma.
\end{proof}
Note that Lemma \ref{cat_gcat} generalizes  \cite[Proposition 3.5]{CG}.

\section{Higher equivariant topological complexity}\label{Sec:heqtc}
In this section we introduce and study the higher equivariant topological
complexity, and compute it for some particular examples. First we recall the 
definition of the equivariant sectional category. This is a generalization
of sectional category for spaces equipped with a $G$-action. 

Let $ f: X \to Y$ be a $G$-map between two $G$-spaces $X$ and $Y$. 
The map $f$ is called a $G$-fibration if it satisfies the homotopy lifting property for $G$-maps, i.e.
for any $G$-space $Z$, a $G$-map $ g_0 : Z \to X $, and any $G$-homotopy $ g' : Z \times I \to Y$ such that
$ f g_0 = g'i_0 $, there exists 
a $G$-homotopy $\tilde{g}: Z \times I \to X$ making the two triangles in the following diagram commute.
\begin{center}
\begin{tikzpicture}
\matrix (m) [matrix of math nodes, row sep=4em, column sep=6em]
{ Z &  X \\
  Z \times I & Y \\};
\path[->, line width=1pt]
(m-1-1) edge node[auto] {$g_0$} (m-1-2)
(m-1-1) edge node[left] {$i_0$} (m-2-1)
(m-2-1) edge node[below] {$g'$} (m-2-2)
(m-1-2) edge node[auto] {$f$} (m-2-2);
\path[->, line width=1pt, dashed](m-2-1) edge node[above] {$\tilde{g}$} (m-1-2);
\end{tikzpicture}
\end{center}
\begin{defn}
The equivariant sectional category of a $G$-fibration $f : X \to Y$, denoted by $secat_G(f)$,  
is the least integer $m$ such that $Y$ can be covered by $m$ invariant open
sets $U_1, \ldots, U_m$, for each of which there exists a $G$-section to $f$,
i.e. there is a $G$-map $s_j \colon  U_j \to X$ such that $f \circ s_j = \iota_{U_j} \colon U_j \hookrightarrow Y$. 
\begin{center}
\begin{tikzpicture}
\matrix (m) [matrix of math nodes, row sep=3em, column sep=5em]
{  &  X \\
  U_j & Y \\};
\path[->, line width=1pt]
(m-2-1) edge[bend left] node[auto] {$s_j$} (m-1-2)
(m-1-2) edge node[auto] {$f$} (m-2-2);
\path[right hook->, line width=1pt](m-2-1) edge (m-2-2);
\end{tikzpicture}
\end{center}
If  no such integer exists then $secat_G(f) = \infty$.
\end{defn}

See Section 4 of \cite{CG} for some basic results on equivariant sectional category.
Note that Since $f$ is a $G$-fibration, if there exists a $G$-map $s_j$ 
making the diagram commute up to $G$-homotopy, i.e. 
$$f \circ s_j \simeq_G \iota_{U_j} \colon U_j \hookrightarrow Y,$$ 
then there exists a $G$-map $s'_j$ making the diagram strictly commute.

Although we are defining the equivariant sectional category only for $G$-fibrations, 
but in fact we can consider it for any $G$-map as follows.
\begin{defn}
Given any $G$-map $f : X \to Y$ with $X$ and $ Y$ path connected $G$-spaces,
a $G$-fibrational substitute of $f$ is defined as a $G$-fibration 
$\hat{f} : E \to Y$ such that there exists  a $G$-homotopy equivalence
$h$ that makes the following diagram of $G$-maps commute.
\begin{center}
\begin{tikzpicture}
\matrix (m) [matrix of math nodes, row sep=3em, column sep=5em]
{  &  E \\
  X & Y \\};
\path[->, line width=1pt]
(m-2-1) edge node[auto] {$h$} (m-1-2)
(m-1-2) edge node[auto] {$\hat{f}$} (m-2-2)
(m-2-1) edge node[below] {$f$} (m-2-2);
\end{tikzpicture}
\end{center}
\end{defn}
\begin{lemma}
Any $G$-map between two path-connected $G$-spaces has a $G$-fibrational substitution.
\end{lemma}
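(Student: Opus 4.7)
The plan is to mimic the classical mapping-path-space construction, and then to check that every step of that construction respects the $G$-action. Given the $G$-map $f \colon X \to Y$, I would set
$$ E = \{ (x,\gamma) \in X \times Y^I \mid \gamma(0) = f(x) \}, \qquad \hat f(x,\gamma) = \gamma(1), $$
and define $h \colon X \to E$ by $h(x) = (x, c_{f(x)})$, where $c_{f(x)}$ denotes the constant path at $f(x)$. The identity $\hat f \circ h = f$ is then immediate. It only remains to promote this to the equivariant setting.

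First, I would equip $Y^I$ with the pointwise $G$-action $(g \cdot \gamma)(t) = g \cdot \gamma(t)$, and then give $E \subseteq X \times Y^I$ the diagonal $G$-action $g \cdot (x,\gamma) = (gx, g\gamma)$. The $G$-equivariance of $f$ ensures that $E$ is stable under this action: if $\gamma(0) = f(x)$ then $(g\gamma)(0) = g\gamma(0) = gf(x) = f(gx)$. A direct computation shows that $\hat f$ and $h$ are both $G$-equivariant.

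Next, I would verify that $h$ is a $G$-homotopy equivalence. Let $p_X \colon E \to X$ be the projection, which is clearly $G$-equivariant; then $p_X \circ h = \mathrm{id}_X$. For the reverse composition, the classical retraction
$$ H \colon E \times I \to E, \qquad H\bigl((x,\gamma),s\bigr) = \bigl(x,\, \gamma_s\bigr), $$
where $\gamma_s(t) = \gamma\bigl((1-s)t\bigr)$ shrinks each path $\gamma$ back to its starting point $f(x)$, is a homotopy from $\mathrm{id}_E$ to $h \circ p_X$. Because the action on $Y^I$ is pointwise and the reparameterization $t \mapsto (1-s)t$ is independent of the point, $H$ is a $G$-homotopy.

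Finally, I would check that $\hat f$ is a $G$-fibration. Given a $G$-space $Z$, a $G$-map $g_0 \colon Z \to E$ with components $g_0(z) = (x_z, \gamma_z)$, and a $G$-homotopy $g' \colon Z \times I \to Y$ satisfying $\hat f \circ g_0 = g' \circ i_0$, the standard lift is
$$ \tilde g(z,t) = \bigl(x_z,\; \gamma_z \ast g'(z, \cdot)|_{[0,t]}\bigr), $$
with suitable reparameterization so that the concatenation is a path from $f(x_z)$ to $g'(z,t)$. The $G$-equivariance of $\tilde g$ follows from the $G$-equivariance of $g_0$ and $g'$ combined with the fact that the pointwise $G$-action on $Y^I$ commutes with concatenation and reparameterization of paths. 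The main (mild) obstacle is really just bookkeeping here: one must check that the action on $Y^I$ is continuous in the compact-open topology and that the diagonal action on $E$ makes $E$ a genuine $G$-space, but these are both standard consequences of the continuity of the $G$-action on $Y$ together with the $G$-equivariance of $f$.
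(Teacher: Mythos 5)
Your proof is correct and follows essentially the same route as the paper: both use the mapping path space $E = X\times_f Y^I$ with the diagonal $G$-action as the $G$-fibrational substitute, with the standard concatenation argument showing $\hat f$ is a $G$-fibration (you simply supply the details the paper leaves to the reader). The only difference is notational: the paper writes the condition as $\gamma(1)=f(x)$ together with $\hat f(x,\gamma)=\gamma(1)$, which as printed is a typo, and your convention $\gamma(0)=f(x)$, $\hat f(x,\gamma)=\gamma(1)$ is the correct one.
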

\begin{proof}

Let $f : X \to Y$ be a $G$-map between two path connected $G$-spaces $X$ and $ Y$. 
Considering 
$$ E = X \times_f Y^I  = \big\{\;  (x,\gamma) \; : \; \gamma: I \to Y , \; \gamma(1) = f(x) \;\big\}, $$
with the diagonal action, one can show that the map 
$$  \hat{f} : E \to Y , \quad \hat{f}(x,\gamma) = \gamma(1) $$
 is a $G$-fibration. 

\end{proof} 

Therefore, any $G$-map has a $G$-fibrational substitute. So we can define the equivariant sectional category of any $G$-map $f : X \to Y$ to be the equivariant sectional category of its $G$-fibrational substitute.


\begin{prop}\label{secat1}
For any diagram of $G$-maps $X \xrightarrow{f} Y \xrightarrow{g} Z$, we have 
$$secat_G(gf) \geq secat_G(g) .$$
\end{prop}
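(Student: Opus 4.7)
The plan is to show directly that any $G$-section of $gf$ over an invariant open $U \subseteq Z$ pushes forward, via $f$, to a $G$-section of $g$ over the same $U$. The underlying idea is transparent: if $s \colon U \to X$ satisfies $(gf) \circ s = \iota_U$, then $f \circ s \colon U \to Y$ is $G$-equivariant (as a composition of $G$-maps) and satisfies $g \circ (f \circ s) = (gf) \circ s = \iota_U$. So any invariant open cover that sectionalizes $gf$ also sectionalizes $g$, giving the desired inequality.

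The only subtlety is that by the preceding definition $secat_G$ is computed via a $G$-fibrational substitute when $f$ and $g$ are not themselves $G$-fibrations. First I would set up the standard substitutes from the previous lemma: $\hat{g} \colon E_g = Y \times_g Z^I \to Z$ and $\widehat{gf} \colon E_{gf} = X \times_{gf} Z^I \to Z$, both with diagonal $G$-action and projection onto the endpoint of the path. Then I would define a $G$-map
$$\tilde f \colon E_{gf} \to E_g, \qquad \tilde f(x,\gamma) = (f(x), \gamma).$$
This is well-defined because the membership condition $\gamma(1) = gf(x)$ for $(x,\gamma) \in E_{gf}$ is exactly the condition $\gamma(1) = g(f(x))$ needed for $(f(x),\gamma) \in E_g$; it is equivariant since $f$ is; and it satisfies $\hat g \circ \tilde f = \widehat{gf}$ by direct inspection.

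Now suppose $secat_G(gf) = m$ and let $\{U_1, \ldots, U_m\}$ be an invariant open cover of $Z$ equipped with $G$-sections $s_j \colon U_j \to E_{gf}$ of $\widehat{gf}$. The maps $\tilde f \circ s_j \colon U_j \to E_g$ are $G$-equivariant and satisfy $\hat g \circ (\tilde f \circ s_j) = \widehat{gf} \circ s_j = \iota_{U_j}$, so they are $G$-sections of $\hat g$ over the same cover. Therefore
$$secat_G(g) = secat_G(\hat g) \leq m = secat_G(\widehat{gf}) = secat_G(gf).$$

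I do not expect any significant obstacle: the argument is essentially formal. The only point that requires a small check is the well-definedness and $G$-equivariance of $\tilde f$ and the identity $\hat g \circ \tilde f = \widehat{gf}$; both follow immediately from the explicit description of the fibrational substitute.
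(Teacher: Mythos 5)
Your argument is correct, and it is the standard one: the paper states this proposition without proof (it is the equivariant analogue of the familiar fact that Schwarz genus can only decrease under postcomposition), so your write-up simply supplies the expected formal argument. The one genuine subtlety --- that $f$ and $g$ need not be $G$-fibrations, so one must pass to the $G$-fibrational substitutes $E_{gf}$ and $E_g$ and check that $(x,\gamma)\mapsto (f(x),\gamma)$ is an equivariant map over $Z$ --- is exactly the point you address, and your verification is complete.
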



Let $Y$ be a $G$-space. Consider the $n$-fold product $Y^n$ with the diagonal action
of $G$. Let $J_n$ be the wedge of $n$ closed intervals $I_i$ for $i = 1, \cdots , n$
where the zero points $0_i \in I_i$ are identified. Then $P_n(Y) =  Y^{J_n}$ is a $G$-space with the following action, 
 $$ G \times P_n(Y) \to P_n(Y),\quad [g \lambda](t) =  g(\lambda(t)).$$

\begin{lemma}\label{en_fib}
Let $e_n \colon P_n(Y) \to Y^{n} $ be a $G$-map defined by
$$  e_n(\lambda) = \big(\lambda(1_1), \ldots, \lambda(1_n)\big) . $$ 
Then $e_n $  is a $G$-fibration.
\end{lemma}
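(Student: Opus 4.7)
My plan is to verify the equivariant homotopy lifting property by constructing an explicit $G$-equivariant path-lifting function for $e_n$ (a Hurewicz lifting function) and then feeding it the data of any candidate homotopy. This reduces the problem to a single formula, and once that formula is written down, continuity and the verification conditions are straightforward.

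Concretely, an element $\lambda \in P_n(Y) = Y^{J_n}$ is an $n$-tuple of paths $\lambda_i \colon I_i \to Y$ with a common starting point. Given such a $\lambda$ together with a path $\alpha = (\alpha_1,\ldots,\alpha_n) \colon I \to Y^n$ satisfying $\alpha(0) = e_n(\lambda)$, I would define the lift $\Lambda(\lambda,\alpha) \colon I \to P_n(Y)$ by reparametrizing and then concatenating: set
\[
\bigl[\Lambda(\lambda,\alpha)(s)\bigr]_i(t_i) =
\begin{cases} \lambda_i\bigl((1+s)t_i\bigr) & \text{if } (1+s)t_i \leq 1, \\
\alpha_i\bigl((1+s)t_i - 1\bigr) & \text{if } (1+s)t_i \geq 1.
\end{cases}
\]
At the overlap $(1+s)t_i = 1$ both cases equal $\lambda_i(1) = \alpha_i(0)$, so the formula is well defined, continuous jointly in $(s,t_i)$ by the pasting lemma, and agrees with $\lambda_i(0)$ at $t_i = 0$ independently of $i$ (so $\Lambda(\lambda,\alpha)(s)$ really does land in $P_n(Y)$). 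Direct substitution shows $\Lambda(\lambda,\alpha)(0) = \lambda$ and $e_n(\Lambda(\lambda,\alpha)(s)) = \alpha(s)$, which are the defining properties of a path-lifting function.

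Given any test $G$-space $Z$, a $G$-map $g_0 \colon Z \to P_n(Y)$, and a $G$-homotopy $g' \colon Z \times I \to Y^n$ with $e_n \circ g_0 = g' \circ i_0$, I would then define
\[
\tilde{g}(z,s) \;=\; \Lambda\bigl(g_0(z),\, g'(z,-)\bigr)(s),
\]
which gives a lift $\tilde{g}\colon Z \times I \to P_n(Y)$ making both triangles commute. The continuity of $\tilde g$ follows from the continuity of $\Lambda$ together with the exponential law.

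The only point that needs to be checked to get a $G$-fibration (and not merely a Hurewicz fibration) is the $G$-equivariance of $\Lambda$. Since the $G$-action on both $P_n(Y)$ and $Y^n$ is the pointwise action $[g\mu](\cdot) = g \cdot \mu(\cdot)$, and since the formula above only plugs $\lambda_i$ and $\alpha_i$ into reparametrizations of their time variables, one sees immediately that $\Lambda(g\lambda, g\alpha)(s) = g \cdot \Lambda(\lambda,\alpha)(s)$. Consequently $\tilde g(gz, s) = g \cdot \tilde g(z,s)$, so $\tilde g$ is a $G$-homotopy. The "obstacle" is really just bookkeeping: one must check that the reparametrization is well defined on the wedge $J_n$ (it is, because the wedge-point value $\lambda_i(0)$ does not depend on $i$) and that the lifting map is continuous at the breakpoint $(1+s)t_i = 1$ — both handled by the pasting lemma. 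No deeper input is needed.
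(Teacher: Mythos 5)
Your proof is correct, but it follows a genuinely different route from the paper's. The paper argues structurally: it first records (just before the lemma) that if an inclusion $f\colon X\to Y$ of locally compact Hausdorff spaces admits a retracting function $\rho\colon Y\times I\to M_f$ in the Str{\o}m sense, then the induced restriction map $\zeta_f\colon Z^Y\to Z^X$ is a $G$-fibration for any $G$-space $Z$; the proof of the lemma is then the one-line observation that $B=\{1_1,\ldots,1_n\}\subset J_n$ has a retracting function and $Y^B\cong Y^n$, so $e_n=\zeta_{B\hookrightarrow J_n}$ is a $G$-fibration. You instead write down an explicit equivariant Hurewicz lifting function $\Lambda$ by compressing $\lambda_i$ into $[0,1/(1+s)]$ and appending $\alpha_i|_{[0,s]}$, and your verifications are all sound: the two branches agree at the breakpoint because $\alpha_i(0)=\lambda_i(1_i)$, the value at the wedge point is $\lambda_i(0)$ independently of $i$ so the lift genuinely lands in $P_n(Y)=Y^{J_n}$, the identities $\Lambda(\lambda,\alpha)(0)=\lambda$ and $e_n(\Lambda(\lambda,\alpha)(s))=\alpha(s)$ hold by direct substitution, and equivariance is immediate because the $G$-action on mapping spaces is pointwise while $\Lambda$ only reparametrizes the time variables (the exponential-law step uses that $J_n$ and $I$ are locally compact Hausdorff, which is fine). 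What each approach buys: the paper's argument is shorter and reusable for any cofibration of locally compact Hausdorff spaces, but it leans on a general fact stated without proof; your argument is longer but entirely self-contained, makes the equivariance transparent, and produces an explicit lifting function that could be reused later (e.g.\ to upgrade homotopy sections of $e_n$ to strict sections, as the paper does after Proposition \ref{equiv_defn_hetc}).
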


To prove Lemma \ref{en_fib} we need the following result.
Let $f: X \to Y$ be a $G$-map between two $G$-spaces $X$ and $Y$. 
Let $M_f$ be the mapping cylinder of  $f$ and consider $h: M_f \to Y \times I $ defined by
$$   h(x,t) = \big( f(x), t \big) , \quad h(y,0) = (y,0) . $$
A retracting function for $f$ is a map $\rho : Y \times I \to M_f$ which is a left inverse of $h$. 
One can show that if there exists a retracting function for $f:X \to Y$, where $X$ and $Y$ are locally compact Hausdorff spaces, then for any $G$-space $Z$, the $G$-map 
$$ \zeta_f: Z^Y \to Z^X $$ 
defined by $\zeta_f(u) = u \circ f$ is a $G$-fibration.
Using this idea we can prove Lemma \ref{en_fib} as follows.

\begin{proof}[Proof of Lemma \ref{en_fib}]
Note that the inclusion $ B= \{1_1, 1_2, \cdots, 1_n \} \subset J_n$ has a retracting function and $Y^B$ is homeomorphic to $Y^{n}$. Therefore, $e_n $  is a $G$-fibration.
\end{proof}

\begin{defn}
The higher equivariant topological complexity, denoted by $TC_{G, n}(Y)$, is defined by $$TC_{G, n}(Y)= secat_G(e_n).$$

\end{defn}

When $n=2$ in the above definition, then $TC_{G, n}(Y)$ is the equivariant topological complexity defined in \cite{CG}. We also remark that if $n=2$ and $G$ acts trivially or in particular $G$ is trivial, then $TC_{G, n}(Y)$ is the Farber's complexity of a motion planning algorithm on $Y$. If $n > 2$ and $G$ is trivial then $ TC_{G, n}(Y)$ is Rudyak's 
higher topological complexity \cite{Rud}. In the following we give some equivalent definition of higher equivariant topological complexity. 

\begin{prop}\label{equiv_defn_hetc}
For a $G$-space $Y$, the following statements are equivalent:
\begin{enumerate}
\item $_{\Delta_n(Y)}cat_{G}(Y^n) \leq k$;

\item $TC_{G,n}(Y) \leq k$;

\item there exist $ k$ invariant open sets $V_1, \ldots, V_k$ which cover $Y^n$
and for each open set $V_j$ there exists a map $s_j : V_j \to P_n(Y)$
such that the map $e_n \circ s_j$ is $G$-homotopic to the inclusion
$V_j \hookrightarrow Y^n$. \label{hom_equ}
\end{enumerate}
\end{prop}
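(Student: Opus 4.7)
The plan is to prove the cycle (1) $\Leftrightarrow$ (3) and (2) $\Leftrightarrow$ (3), so that (2) serves as the ``strict-section'' version of (3) and (1) as the ``$G$-contraction'' reformulation. By definition $TC_{G,n}(Y) = secat_G(e_n)$ is the minimum number of $G$-invariant open sets covering $Y^n$ which admit strict $G$-sections of $e_n$, so condition (2) is a direct unpacking of this definition.

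The equivalence (2) $\Leftrightarrow$ (3) is essentially immediate. Any strict section is in particular a section up to $G$-homotopy, and for the converse I would invoke Lemma \ref{en_fib}, which says $e_n$ is a $G$-fibration. The remark made in the paper right after the definition of $secat_G$ — that for a $G$-fibration, a $G$-map $s_j$ with $e_n \circ s_j \simeq_G \iota_{V_j}$ can be rectified to a $G$-map $s'_j$ with $e_n \circ s'_j = \iota_{V_j}$ — closes this step.

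For (1) $\Leftrightarrow$ (3), I would set up a pointwise correspondence on each $G$-invariant open set $V \subseteq Y^n$ between $G$-homotopies witnessing $V \eqcont{G} \Delta_n(Y)$ and $G$-sections $s \colon V \to P_n(Y)$ of $e_n$. Given a $G$-homotopy $H \colon V \times I \to Y^n$ with $H_0 = \iota_V$ and $H_1(V) \subseteq \Delta_n(Y)$, write $H(v,t) = (H^1(v,t),\ldots,H^n(v,t))$. Then $H^1(v,1) = \cdots = H^n(v,1)$, call this common value $z(v)$. Define $s(v) \colon J_n \to Y$ by sending the parameter $t \in I_i$ on the $i$-th arm to $H^i(v,1-t)$; this is well-defined at the wedge vertex because all $H^i(v,1) = z(v)$, and it satisfies $e_n(s(v)) = (H^1(v,0),\ldots,H^n(v,0)) = v$. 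Conversely, given $s \colon V \to P_n(Y)$ with $e_n \circ s = \iota_V$, express $s(v)$ as an $n$-tuple of paths $\gamma_1^v,\ldots,\gamma_n^v$ with common initial point $z(v)$ and $\gamma_i^v(1_i) = \pi_i(v)$, then set $H(v,t) = (\gamma_1^v(1-t),\ldots,\gamma_n^v(1-t))$, yielding a $G$-homotopy from $\iota_V$ into $\Delta_n(Y)$.

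The argument is mostly structural bookkeeping; the key external input is the $G$-fibration property of $e_n$ from Lemma \ref{en_fib}, used to bridge (3) and (2). The only technical point requiring care is checking equivariance of the constructed maps, and this is automatic because the diagonal $G$-action on $Y^n$ corresponds pointwise to the post-composition $G$-action $[g\lambda](t) = g(\lambda(t))$ on $P_n(Y) = Y^{J_n}$, so equivariance is inherited coordinate-by-coordinate on each arm $I_i$ from equivariance of the $H^i$ (or of each $\gamma_i^v$).
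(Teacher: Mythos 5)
Your argument is correct and is essentially the proof the paper intends: the paper's own ``proof'' merely defers to \cite[Lemma 3.5]{LM}, and your explicit dictionary between $G$-homotopies contracting $V$ into $\Delta_n(Y)$ and strict $G$-sections of $e_n$ (reparametrizing each coordinate homotopy $H^i(v,\cdot)$ as the $i$-th arm of an element of $Y^{J_n}$, and reading it back off a section), together with the rectification of homotopy sections to strict sections via the $G$-fibration property of Lemma \ref{en_fib}, is exactly that argument adapted to $J_n$. The only cosmetic point is that your ``(1) $\Leftrightarrow$ (3)'' step really establishes (1) $\Leftrightarrow$ (2) (strict sections), but combined with (2) $\Leftrightarrow$ (3) the three-way equivalence closes, and your equivariance and continuity (pasting over the arms of $J_n$) checks are the right ones.
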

\begin{proof}
The proof is similar to the proof of  \cite[Lemma 3.5]{LM} with suitable changes
in domain and co-domain of the respective maps. 
\end{proof}

Note that since $e_n$ is a $G$-fibration, in statement (\ref{hom_equ}) if there exists a map $s_j : V_j \to P_n(Y)$
such that the map $e_n \circ s_j$ is $G$-homotopic to the inclusion
$V_j \hookrightarrow Y^n$, then there exists a map $s'_j $ 
such that the map $e_n \circ s'_j$ is equal to the inclusion
$V_j \hookrightarrow Y^n$.
\begin{corollary}
If $Y$ is a $G$-space then $TC_n(Y) \leq TC_{G, n}(Y)$.
\end{corollary}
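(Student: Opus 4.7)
The plan is to show that every equivariant motion-planning cover is, after forgetting the $G$-structure, an ordinary motion-planning cover in the sense of Rudyak; this is a direct ``forget the group action'' argument, so no real obstacle appears.

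More concretely, I would argue as follows. Assume $TC_{G,n}(Y) = k$; by definition of $secat_G(e_n)$, together with Proposition \ref{equiv_defn_hetc}, there exist $G$-invariant open sets $V_1,\ldots,V_k$ covering $Y^n$ and, for each $j$, a $G$-map $s_j : V_j \to P_n(Y)$ with $e_n \circ s_j = \iota_{V_j}$ (using the strictification remark following Proposition \ref{equiv_defn_hetc}). Now I would simply observe that each $V_j$ is still an open subset of $Y^n$ (the $G$-invariance is extra structure, not a restriction on openness), so $\{V_1,\ldots,V_k\}$ is an open cover of $Y^n$, and each $s_j$, viewed as a plain continuous map, remains a strict section of $e_n : P_n(Y) \to Y^n$. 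Since $TC_n(Y)$ is the ordinary sectional category of $e_n$ (equivalently, the minimal cardinality of such a cover without any equivariance requirement), we conclude $TC_n(Y) \leq k = TC_{G,n}(Y)$.

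The only point that deserves a sentence of care is making sure that the underlying map $e_n$ used to define $TC_n(Y)$ is literally the same map whose equivariant sectional category computes $TC_{G,n}(Y)$: this is immediate from Lemma \ref{en_fib} and the paragraph preceding it, since the definition of $e_n$ does not change when one forgets the action. There is no hard step here; the corollary is essentially the tautology that the set of equivariant sections is contained in the set of sections.
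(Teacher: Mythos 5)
Your proof is correct and is exactly the argument the paper intends: the corollary is stated without proof as an immediate consequence of the definition of $secat_G$, since a $G$-invariant open cover with (strict) $G$-sections of $e_n$ is in particular an open cover with sections, giving $secat(e_n) \leq secat_G(e_n)$. Nothing further is needed.
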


Let $Y$ and $Z$ be $G$ spaces. Then $Z$ is $G$-dominated by $Y$ if there exist $G$-maps $f : Y \to Z$ and $g : Z \to Y $ such that $f \circ g \simeq_{G} Id_Z $. 
In addition, if $g \circ f \simeq_{G} Id_Y$ then $f$ and $g$ are called $G$-homotopy equivalences, as
well as $Y$ and $Z$ are called $G$-homotopy equivalent.


\begin{prop}
Higher equivariant topological complexity is a $G$-homotopy invariant.
\end{prop}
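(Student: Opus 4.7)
The plan is to show $TC_{G,n}(Y) = TC_{G,n}(Z)$ whenever $Y$ and $Z$ are $G$-homotopy equivalent, by transporting the open covers and partial sections from one space to the other using the equivalences. The characterization in Proposition \ref{equiv_defn_hetc}(3) — plus the subsequent remark that $G$-sections up to $G$-homotopy can be rectified to strict $G$-sections, since $e_n$ is a $G$-fibration — is exactly what makes such a transport feasible.

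Let $f \colon Y \to Z$ and $g \colon Z \to Y$ be $G$-maps with $f \circ g \simeq_G Id_Z$ and $g \circ f \simeq_G Id_Y$. The key auxiliary construction is the postcomposition map
\[
f_* \colon P_n(Y) \to P_n(Z), \qquad f_*(\lambda)(t) = f\bigl(\lambda(t)\bigr), \quad t \in J_n.
\]
Since $f$ is $G$-equivariant, so is $f_*$. Moreover, a direct check on endpoints shows that the square
\[
\begin{tikzcd}
P_n(Y) \arrow[r, "f_*"] \arrow[d, "e_n"'] & P_n(Z) \arrow[d, "e_n"] \\
Y^n \arrow[r, "f^n"'] & Z^n
\end{tikzcd}
\]
commutes, and similarly for $g_*$.

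Now assume $TC_{G,n}(Y) \leq k$, and let $V_1, \ldots, V_k$ be $G$-invariant open sets covering $Y^n$ with $G$-sections $s_j \colon V_j \to P_n(Y)$ satisfying $e_n \circ s_j = \iota_{V_j}$. Set $W_j = (g^n)^{-1}(V_j) \subset Z^n$; since $g^n$ is continuous and $G$-equivariant, the $W_j$ are $G$-invariant open sets, and they cover $Z^n$ because the $V_j$ cover $Y^n$. Define
\[
t_j \colon W_j \to P_n(Z), \qquad t_j = f_* \circ s_j \circ g^n|_{W_j}.
\]
Using the commutative square and the section property of $s_j$,
\[
e_n \circ t_j = f^n \circ e_n \circ s_j \circ g^n|_{W_j} = f^n \circ g^n|_{W_j} = (f \circ g)^n|_{W_j} \simeq_G \iota_{W_j},
\]
where the last step uses $f \circ g \simeq_G Id_Z$ applied coordinatewise with the diagonal $G$-action. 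Thus each $t_j$ is a $G$-section to $e_n$ up to $G$-homotopy, and by the remark following Proposition \ref{equiv_defn_hetc} it can be replaced by a strict $G$-section. Hence $TC_{G,n}(Z) \leq k$. Reversing the roles of $f$ and $g$ gives the opposite inequality, yielding equality.

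The only step that requires genuine care is checking that the coordinatewise $G$-homotopy $(f \circ g)^n \simeq_G Id_{Z^n}$ really is $G$-equivariant for the diagonal $G$-action on $Z^n$; this is where the $G$-equivariance of the given homotopy between $f \circ g$ and $Id_Z$ is used, and it is the ingredient that upgrades ordinary homotopy invariance of $TC_n$ to the equivariant statement.
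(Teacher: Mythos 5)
Your proof is correct, but it takes a different route from the paper. The paper disposes of this in one line: it identifies $TC_{G,n}(Y)$ with ${}_{\Delta_n(Y)}cat_G(Y^n)$ via Proposition \ref{equiv_defn_hetc} and then invokes the $G$-homotopy invariance of the relative equivariant category from \cite[Proposition 2.4]{LM}, applied to the pair $(Y^n,\Delta_n(Y))$ with the diagonal action. You instead give a self-contained transport argument at the level of the fibration $e_n$: the commuting square relating $f_*$ and $f^n$, the pulled-back cover $W_j=(g^n)^{-1}(V_j)$, and the rectification of homotopy sections to strict sections using that $e_n$ is a $G$-fibration are all checked correctly, and the point you flag at the end --- that the coordinatewise homotopy $(f\circ g)^n\simeq_G Id_{Z^n}$ is equivariant for the diagonal action precisely because the homotopy $f\circ g\simeq_G Id_Z$ is --- is indeed the crux. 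What the paper's approach buys is brevity and reuse of established machinery; what yours buys is transparency and a slightly sharper statement, since your argument in each direction uses only one of the two homotopies $f\circ g\simeq_G Id_Z$, $g\circ f\simeq_G Id_Y$, so it actually shows that $TC_{G,n}(Z)\leq TC_{G,n}(Y)$ whenever $Z$ is merely $G$-dominated by $Y$ (matching the domination notion the paper defines just before this proposition). It also sidesteps the small bookkeeping hidden in the citation route, namely that $f^n$ carries $\Delta_n(Y)$ into $\Delta_n(Z)$.
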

\begin{proof}
This follows from Proposition \ref{equiv_defn_hetc} and \cite[Proposition 2.4]{LM}. 
\end{proof}


Consider the diagonal map $\bigtriangleup_n : Y \to Y^n$ defined by
 $\bigtriangleup_n(y)= (y, \ldots, y)$. We have the following result.


\begin{prop}\label{diag_1}
Let $U$ be a $G$-invariant open subset of $Y^n$. There exists a $G$-section $ s: U \to P_n(Y)$ to the $G$-fibration $e_n : P_n(Y) \mapsto Y^{n}$ if and only if the inclusion $\iota : U \to Y^{n}$ is $G$-homotopic to a map with values in the diagonal $\bigtriangleup_n(Y) \subseteq Y^{n}$. 
\end{prop}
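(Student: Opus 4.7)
The plan is to use the natural correspondence between elements of $P_n(Y)$ and deformations of $n$-tuples to a common point. An element $\lambda \in P_n(Y)$ is, by definition, a continuous map $J_n \to Y$, which is the same data as $n$ paths $\lambda_i : I_i \to Y$ sharing a common value at the identified wedge point $0$. Thus a $G$-section $s \colon U \to P_n(Y)$ over $U$ provides, continuously and $G$-equivariantly in $y = (y_1,\ldots,y_n) \in U$, a point $z(y) \in Y$ together with paths from $z(y)$ to each $y_i$. The map $e_n \circ s = \iota_U$ ensures that these paths really do terminate at the coordinates of $y$.

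For the $(\Rightarrow)$ direction, given such a $G$-section $s$, I define the $G$-homotopy $H \colon U \times I \to Y^n$ by
\[
H(y,t) \;=\; \bigl(s(y)((1-t)\cdot 1_1),\, \ldots,\, s(y)((1-t)\cdot 1_n)\bigr),
\]
where $(1-t)\cdot 1_i$ denotes the corresponding point on the $i$-th arm $I_i$ of $J_n$ (with $0 \cdot 1_i$ being the wedge point). At $t=0$ we recover $e_n(s(y)) = y$, so $H_0 = \iota$. At $t=1$ all coordinates coincide with the common value $s(y)(0)$, so $H_1(U) \subseteq \triangle_n(Y)$. Equivariance of $H$ follows from the defining relation $[g\lambda](\cdot) = g\lambda(\cdot)$ and the $G$-equivariance of $s$.

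For the $(\Leftarrow)$ direction, given a $G$-homotopy $H \colon U \times I \to Y^n$ with $H_0 = \iota$ and $H_1(U) \subseteq \triangle_n(Y)$, I write $H = (H^{(1)},\ldots,H^{(n)})$ in coordinates. For each $y \in U$, the path $t \mapsto H^{(i)}(y, 1-t)$ runs in $Y$ from some point $z(y)$ at $t=0$ (independent of $i$, because $H_1(y) \in \triangle_n(Y)$) to $y_i$ at $t=1$. Assembling these into a map $J_n \to Y$ gives a well-defined element $s(y) \in P_n(Y)$, and continuity of $s$ follows from continuity of $H$ together with the exponential law. By construction $e_n(s(y)) = y$, so $s$ is a strict section; $G$-equivariance follows from that of $H$.

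The main technical point to be careful about is the continuity of $s$ in the backward direction — specifically, that the assignment $y \mapsto s(y)$ is continuous as a map into the function space $Y^{J_n}$. This follows routinely from the compact-open topology adjunction since $J_n$ is a wedge of compact intervals, but it is the step requiring the most care. The equivalence could alternatively be phrased via the $G$-fibration property of $e_n$ (Lemma \ref{en_fib}) by first constructing a homotopy-theoretic section and then lifting, but the explicit construction above is direct and avoids that detour.
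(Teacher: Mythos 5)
Your proof is correct. It differs from the paper's in presentation rather than in underlying geometry: the paper's proof is a one-line appeal to the $G$-map $\varphi\colon Y\to P_n(Y)$, $y\mapsto c_y$, being a $G$-homotopy equivalence with $e_n\circ\varphi=\bigtriangleup_n$, and the homotopy inverse there is realized precisely by shrinking each arm of $J_n$ toward the wedge point --- which is exactly the homotopy $H(y,t)=\bigl(s(y)((1-t)\cdot 1_1),\ldots,s(y)((1-t)\cdot 1_n)\bigr)$ you write in the forward direction, so that half is the unpacked version of the paper's argument. The genuine divergence is in the converse: the paper's route would take the $G$-homotopy $\iota\simeq_G\bigtriangleup_n\circ f$, note that $\varphi\circ f$ is then a section of $e_n$ up to $G$-homotopy, and invoke Lemma \ref{en_fib} (that $e_n$ is a $G$-fibration) to straighten it to a strict section; you instead build the strict section directly by reversing the coordinate paths $t\mapsto H^{(i)}(y,1-t)$ and gluing them at the wedge point, which is legitimate because $H_1(y)\in\bigtriangleup_n(Y)$ forces a common initial value and $H_0=\iota$ forces the correct endpoints. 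What this buys is a self-contained equivalence that does not depend on the fibration property of $e_n$ at all; the price is the continuity check you correctly single out, which does go through by the exponential law since $J_n$ is compact Hausdorff and the quotient map $\bigsqcup_i I_i\to J_n$ remains a quotient map after taking the product with $U$. Both directions of your argument are $G$-equivariant for the action $[g\lambda](t)=g(\lambda(t))$, so the proof stands as written.
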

\begin{proof}
Consider the $G$-map $\varphi : Y \hookrightarrow P_n(Y)$ 
defined by $y \mapsto c_y$ where $c_y : J_n \to Y$ is the constant map at $y$. The result follows from the fact that $\varphi $ is a $G$-homotopy equivalent.
\end{proof}

\begin{corollary}
Let $Y$ be a $G$-space. Then 
$$TC_{G, n}(Y) \;\; = \;\;  secat_G(\bigtriangleup_n) \;\; = \;\;
_{\bigtriangleup_n (Y)}cat_{G} Y^n .$$
\end{corollary}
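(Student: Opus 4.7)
The plan is to chain three equalities: the leftmost is definitional, the middle comes from recognizing $e_n$ as a $G$-fibrational substitute of $\bigtriangleup_n$, and the rightmost is a direct translation of Proposition \ref{diag_1} into the language of $A$-$G$-categorical coverings.

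First, by definition $TC_{G,n}(Y) = secat_G(e_n)$, so the equality $TC_{G,n}(Y) = secat_G(\bigtriangleup_n)$ reduces to showing $secat_G(\bigtriangleup_n) = secat_G(e_n)$. For this I would use the $G$-map $\varphi : Y \to P_n(Y)$, $y \mapsto c_y$, that already appeared in the proof of Proposition \ref{diag_1}. It is immediate that $e_n \circ \varphi = \bigtriangleup_n$, and we already know $\varphi$ is a $G$-homotopy equivalence. Since $e_n$ is a $G$-fibration by Lemma \ref{en_fib}, this exhibits $e_n$ as a $G$-fibrational substitute of $\bigtriangleup_n$, and hence $secat_G(\bigtriangleup_n) = secat_G(e_n)$ by the definition of equivariant sectional category for an arbitrary $G$-map.

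For the second equality $TC_{G,n}(Y) = {}_{\bigtriangleup_n(Y)}cat_G(Y^n)$, I would argue by showing each side is at most the other. Given any $G$-invariant open cover $\{V_1,\dots,V_k\}$ of $Y^n$ realizing $secat_G(e_n) = k$, each $V_j$ has a $G$-section $s_j : V_j \to P_n(Y)$, and Proposition \ref{diag_1} says this is equivalent to the inclusion $\iota_j : V_j \hookrightarrow Y^n$ being $G$-homotopic to a map into $\bigtriangleup_n(Y)$; but that is precisely the condition $V_j \eqcont{G} \bigtriangleup_n(Y)$ from Definition \ref{G-cont}, so $\{V_j\}$ is a $\bigtriangleup_n(Y)$-$G$-categorical cover, giving ${}_{\bigtriangleup_n(Y)}cat_G(Y^n) \leq k$. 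Conversely, given a $\bigtriangleup_n(Y)$-$G$-categorical cover of size $k$, each open $U_j$ admits a $G$-homotopy from $\iota_{U_j}$ to a map with image in $\bigtriangleup_n(Y)$, and Proposition \ref{diag_1} produces the desired $G$-section of $e_n$ on $U_j$.

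I do not expect any genuine obstacle: the corollary is essentially a repackaging of Proposition \ref{diag_1} together with the observation that $\varphi$ makes $e_n$ a $G$-fibrational substitute of $\bigtriangleup_n$. The only minor technical point to check is that the $\bigtriangleup_n(Y)$-$G$-categorical cover in Definition \ref{agls-cat} is required to consist of (open) $G$-invariant sets that are $G$-contractible \emph{in the ambient space} $Y^n$ to $\bigtriangleup_n(Y)$, which matches exactly the conclusion of Proposition \ref{diag_1}; once this is verified, both inequalities in the second equality are immediate.
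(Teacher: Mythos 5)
Your proposal is correct and follows essentially the route the paper intends: the paper states this corollary without proof immediately after Proposition \ref{diag_1}, relying on exactly the two observations you make, namely that $\varphi$ exhibits $e_n$ as a $G$-fibrational substitute of $\bigtriangleup_n$ (so $secat_G(\bigtriangleup_n)=secat_G(e_n)=TC_{G,n}(Y)$ by definition), and that Proposition \ref{diag_1} translates local $G$-sections of $e_n$ into $G$-contractibility of the cover elements to $\bigtriangleup_n(Y)$, which is also the content of the equivalence of items (1) and (2) in Proposition \ref{equiv_defn_hetc}. Your closing remark about openness of the sets in Definition \ref{agls-cat} is a fair observation about a point the paper itself leaves implicit, but it does not affect the argument.
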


\begin{prop}
$TC_{G, n}(Y) \leq TC_{G, n+1}(Y)$ for all $n \geq 1$.
\end{prop}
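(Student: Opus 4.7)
My plan is to pull back a $G$-categorical covering with sections for $e_{n+1}$ to one for $e_n$ by "doubling" the last coordinate. Specifically, I will exhibit two natural $G$-maps: first a section $\rho\colon Y^n \to Y^{n+1}$ of the projection $\pi\colon Y^{n+1}\to Y^n$ onto the first $n$ coordinates, and second a restriction $r\colon P_{n+1}(Y)\to P_n(Y)$ coming from the inclusion $J_n \hookrightarrow J_{n+1}$ as the first $n$ summands of the wedge. Because $\rho$, $r$, $\pi$, $e_n$, $e_{n+1}$ are all $G$-maps and because taking endpoints commutes with restricting to a sub-wedge, the square $e_n\circ r = \pi\circ e_{n+1}$ commutes.

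Now suppose $TC_{G,n+1}(Y)=k$, so there exist $G$-invariant open sets $V_1,\dots,V_k$ covering $Y^{n+1}$ together with $G$-sections $s_j\colon V_j \to P_{n+1}(Y)$ of $e_{n+1}$. Define
\[
U_j := \rho^{-1}(V_j) \subseteq Y^n, \qquad t_j := r\circ s_j\circ \rho\big|_{U_j} \colon U_j \to P_n(Y).
\]
Since $\rho$ is a continuous $G$-map, each $U_j$ is $G$-invariant and open, and the $U_j$ cover $Y^n$ because the $V_j$ cover $Y^{n+1}$. Each $t_j$ is a composition of $G$-maps and hence a $G$-map. Using the identity $e_n\circ r = \pi\circ e_{n+1}$ and $e_{n+1}\circ s_j = \iota_{V_j}$, I compute
\[
e_n\circ t_j \;=\; (\pi\circ e_{n+1})\circ s_j\circ \rho\big|_{U_j} \;=\; \pi\circ \rho\big|_{U_j} \;=\; \iota_{U_j},
\]
since $\pi\circ\rho = \mathrm{id}_{Y^n}$. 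Thus $t_j$ is a $G$-section of $e_n$ over $U_j$, exhibiting a $G$-sectional covering of size $k$, which gives $TC_{G,n}(Y)\leq k = TC_{G,n+1}(Y)$.

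There is no serious obstacle; the only point requiring a small amount of care is ensuring that the restriction map $r$ and the section $\rho$ are compatible in the sense that $\rho$ sends $(y_1,\dots,y_n)$ to a point whose first $n$ coordinates match the original tuple, so that the pullback of the endpoint condition on $P_{n+1}(Y)$ restricts correctly to the endpoint condition on $P_n(Y)$. Everything else is formal $G$-equivariance, inherited automatically from the diagonal actions on $Y^n$, $Y^{n+1}$ and the pointwise action on $P_n(Y)$, $P_{n+1}(Y)$.
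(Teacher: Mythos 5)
Your proof is correct and follows essentially the same route as the paper's: the paper also uses the doubling map $(y_1,\dots,y_n)\mapsto(y_1,\dots,y_n,y_n)$ together with the restriction $P_{n+1}(Y)\to P_n(Y)$ induced by $J_n\hookrightarrow J_{n+1}$, and pulls back the covering and sections exactly as you do. The only cosmetic difference is that you work with strict sections (legitimate, since $e_{n+1}$ is a $G$-fibration) while the paper phrases the conclusion in terms of $G$-homotopy sections.
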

\begin{proof}
The natural inclusion $\iota_n \colon J_n \hookrightarrow J_{n+1} $
induces a surjective continuous $G$-map $$f_n \colon P_{n+1}(Y) \to P_n(Y)$$ defined by 
$\alpha \mapsto \alpha \circ \iota_n$. 
On the other hand we have a continuous $G$-map
$$\mathfrak{y}_n \colon Y^n \to Y^{n+1}$$ 
defined by $(y_1, \ldots, y_n) \mapsto
(y_1, \ldots, y_n, y_n)$. Let $V \subset Y^{n+1}$ be a $G$-invariant open subset
such that there is a $G$-map $s \colon V \to P_{n+1}(Y)$ with $e_{n+1} \circ s \simeq_G id_V$.
Then $U = \mathfrak{y}_n^{-1}(V)$ is a $G$-invariant open subset of $Y^n$. 
\begin{center}
\begin{tikzpicture}
\matrix (m) [matrix of math nodes, row sep=3em, column sep=5em]
{  &  V &  P_{n+1}(Y) & P_n{Y} \\
  U &   &  &  Y^n \\};
\path[->, line width=1pt]
(m-2-1) edge node[auto] {$\mathfrak{y}_n$} (m-1-2)
(m-1-2) edge node[auto] {$s$} (m-1-3)
(m-1-3) edge node[auto] {$f_n$} (m-1-4)
(m-1-4) edge node[right] {$e_n$} (m-2-4);
\path[right hook->, line width=1pt, dashed](m-2-1) edge (m-2-4);
\end{tikzpicture}
\end{center}
Then the map 
$ f_n \circ s \circ \mathfrak{y}_n = \rho  : U \to P_n{Y}$
is a $G$-homotopy section. This proves the proposition.

\end{proof}

Let $EG \mapsto BG$ be the universal principal $G$-bundle and $Y_G = EG \times_{G} Y$ be the orbit space of the diagonal $G$-action on $EG \times Y$. The space $Y_G$ is known as Borel space of the $G$-space $X$ and $H_G^{\ast}(Y)=H^{\ast}(Y_G)$ is called Borel equivariant cohomology of $Y$. Here the coefficients of the cohomology rings are in a filed.

\begin{prop}
If there exist cohomological classes $\alpha_1, \ldots, \alpha_k \in H_G^{\ast}(Y^{n+1})$ such that $0 = {\bigtriangleup_n}_{G}^{\ast}(\alpha_j) \in H_G^{\ast}(Y)$ for all $j$ and the product $\alpha_1 \cdots \alpha_k$ is non-zero, then $TC_{G, n}(Y) \geq k$.
\end{prop}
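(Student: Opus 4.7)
The plan is to apply the standard Schwarz cohomological lower bound for sectional category, transported into the Borel equivariant setting. The earlier corollary identifies $TC_{G,n}(Y) = \mathrm{secat}_G(\bigtriangleup_n)$ with the equivariant sectional category of the diagonal $\bigtriangleup_n \colon Y \to Y^n$ (interpreting $\alpha_j$ as living in $H_G^{\ast}(Y^n)$, which I take to be the intended statement), so it suffices to show that a non-vanishing $k$-fold product of kernel classes forces $\mathrm{secat}_G(\bigtriangleup_n) \geq k$.

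The main step is a contradiction argument. Suppose $\mathrm{secat}_G(\bigtriangleup_n) \leq k-1$, so by Proposition \ref{diag_1} there is a $G$-invariant open cover $U_1,\ldots,U_{k-1}$ of $Y^n$ together with $G$-maps $s_j \colon U_j \to Y$ such that $\bigtriangleup_n \circ s_j \simeq_G \iota_{U_j}$. Applying the Borel functor $(-)_G = EG \times_G (-)$ produces an open cover $\{(U_j)_G\}$ of $(Y^n)_G$ and ordinary maps $(s_j)_G \colon (U_j)_G \to Y_G$ satisfying $(\bigtriangleup_n)_G \circ (s_j)_G \simeq \iota_{(U_j)_G}$. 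Pulling back the hypothesis $(\bigtriangleup_n)_G^{\ast}(\alpha_j) = 0$ along $(s_j)_G$ and using homotopy invariance of cohomology then yields
\[
\alpha_j\big|_{(U_j)_G} \;=\; (s_j)_G^{\ast}\,(\bigtriangleup_n)_G^{\ast}(\alpha_j) \;=\; 0
\]
in $H^{\ast}((U_j)_G)$ for each $j \in \{1,\ldots,k-1\}$.

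Next I would invoke the long exact sequence of the pair $((Y^n)_G, (U_j)_G)$ to lift each $\alpha_j$ to a relative class $\tilde{\alpha}_j \in H^{\ast}((Y^n)_G, (U_j)_G)$. The relative cup product then lands in
\[
\tilde{\alpha}_1 \smile \cdots \smile \tilde{\alpha}_{k-1} \;\in\; H^{\ast}\bigl((Y^n)_G,\,(U_1)_G \cup \cdots \cup (U_{k-1})_G\bigr) \;=\; H^{\ast}\bigl((Y^n)_G,(Y^n)_G\bigr) \;=\; 0,
\]
since the $(U_j)_G$'s cover $(Y^n)_G$. Restricting back to absolute cohomology gives $\alpha_1 \smile \cdots \smile \alpha_{k-1} = 0$ in $H_G^{\ast}(Y^n)$, whence $\alpha_1 \cdots \alpha_k = (\alpha_1 \cdots \alpha_{k-1})\cdot \alpha_k = 0$, contradicting the hypothesis. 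Hence $TC_{G,n}(Y) \geq k$.

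The main obstacle is the verification that the classical Schwarz machinery (homotopy lifting of sections to strict sections, the relative cup product identity, and naturality of the long exact sequence) all carry over through the Borel construction. The first point is already handled by Proposition \ref{diag_1} together with the $G$-fibration property of $e_n$; the remaining points are non-equivariant cohomological facts applied to the spaces $(U_j)_G \subset (Y^n)_G$ once the covering is produced, so no new input is needed beyond confirming that an open $G$-cover of $Y^n$ Borel-constructs to an open cover of $(Y^n)_G$.
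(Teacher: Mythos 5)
Your proof is correct and is essentially the argument the paper intends: the paper simply defers to the proof of Colman--Grant's Theorem 5.15, which is exactly this Schwarz-type relative cup-product argument carried through the Borel construction (and your reading of $Y^{n+1}$ as a typo for $Y^n$ matches the intended statement). The only cosmetic point is that a cover witnessing $\mathrm{secat}_G(\bigtriangleup_n)\leq k-1$ may have fewer than $k-1$ sets, but it can always be padded, so nothing is lost.
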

\begin{proof}
The proof is analogous to the proof of \cite[Theorem 5.15]{CG}.
\end{proof}


From the definition of higher equivariant topological complexity, for any $G$-space $Y$ 
and for any $n \geq 1$, one have 
$$ TC_{n}(Y) \leq TC_{G, n}(Y) .$$
Also note that if $Y$ is not $G$-connected, then since for all $n \geq 2$ we have $TC(Y) \leq TC_n(Y)$, we obtain that
$$ TC_{G, n}(Y) = \infty .$$ 

The following proposition shows the relations among the topological complexity 
of the fixed point sets under the action of different subgroups of $G$. 
The proof of each statement can be obtained from the proof of 
\cite[Proposition 5.3]{CG} after a modification in domain and co-domain of the
respective maps.
\begin{prop}\label{rel_tc_subgp}
For a $G$-space $Y$, let $H$ and $ K$ be closed subgroups of $G$. Then
\begin{enumerate}
\item $TC_{K, n}(Y^H) \leq TC_{G, n}(Y)$ if $Y^H$ is $K$-invariant.

\item $TC_{n}(Y^H) \leq TC_{G, n}(Y)$, in particular $TC_{n}(Y) \leq TC_{G, n}(Y)$.

\item $TC_{K, n}(Y) \leq TC_{G, n}(Y)$.
\end{enumerate}
\end{prop}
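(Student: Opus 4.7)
My plan is to deduce all three inequalities from a single construction: start with an optimal cover witnessing $TC_{G,n}(Y) = k$ and transfer it, via restriction of action and intersection with the fixed set, to a cover witnessing the desired bounds.

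Concretely, let $k = TC_{G,n}(Y)$ and pick $G$-invariant open sets $V_1, \ldots, V_k$ covering $Y^n$ together with $G$-equivariant sections $s_j : V_j \to P_n(Y)$ of the $G$-fibration $e_n$. For statement (3), I would simply restrict the $G$-action on every object in sight to the closed subgroup $K$: each $V_j$ remains open and $K$-invariant, each $s_j$ remains $K$-equivariant, and $e_n$ remains a $K$-fibration. Hence the same data realize $TC_{K,n}(Y) \leq k$.

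For statement (1), assume $Y^H$ is $K$-invariant and set $U_j := V_j \cap (Y^H)^n$. These sets are open in $(Y^H)^n$ and $K$-invariant, since $V_j$ is $G$-invariant (hence $K$-invariant) and $(Y^H)^n$ is $K$-invariant by hypothesis. The heart of the argument is to check that the restriction of $s_j$ to $U_j$ takes values in $P_n(Y^H) \subseteq P_n(Y)$: if $(y_1,\ldots,y_n) \in U_j$ and $h \in H$, then $h y_i = y_i$ for each $i$, so the $G$-equivariance of $s_j$ gives
\[
h \cdot s_j(y_1,\ldots,y_n) \;=\; s_j(hy_1,\ldots,hy_n) \;=\; s_j(y_1,\ldots,y_n),
\]
which means $s_j(y_1,\ldots,y_n)$ is an $H$-fixed point of $P_n(Y)$, i.e.\ a wedge of paths in $Y^H$. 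Thus $s_j|_{U_j}$ factors through $P_n(Y^H)$ and, being the restriction of a $G$-equivariant map, is $K$-equivariant; it clearly still sections the analogous evaluation map $e_n : P_n(Y^H) \to (Y^H)^n$. This yields $TC_{K,n}(Y^H) \leq k$.

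Statement (2) is then immediate: taking $K = \{e\}$ in (1) gives $TC_n(Y^H) \leq TC_{G,n}(Y)$, and specializing further to $H = \{e\}$ gives $TC_n(Y) \leq TC_{G,n}(Y)$. The only nontrivial verification is the fixed-point argument in (1); the fact that sections of $e_n$ automatically restrict to sections over the fixed set is exactly what makes the proof work, and this is where I expect a careful reader might pause — but as written above it is just a direct consequence of $G$-equivariance.
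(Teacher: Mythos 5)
Your proof is correct and follows essentially the same route as the paper, which simply defers to the proof of Proposition 5.3 of Colman--Grant (the $n=2$ case) ``after a modification in domain and co-domain'': restrict the optimal $G$-invariant cover and its sections to the subgroup $K$ for (3), and intersect with $(Y^H)^n$ while using $P_n(Y)^H = P_n(Y^H)$ for (1), with (2) as the special case $K=\{e\}$. Your write-up just makes explicit the details the paper leaves to the reader.
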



In a similar spirit of a general problem mentioned in \cite{Rud}, one may ask the following. 
\begin{ques}
Given a non-decreasing sequence $\{a_n\}$ of natural number can one construct a topological $G$-space $Y$ such that $TC_{G, n}(Y)=a_n$ for some group $G$?
\end{ques}
\begin{prop}\label{free_act}
Let $G$ be a connected paracompact Hausdorff topological space acting freely on itself. Then $TC_{G, n}(G) =cat(G^{n-1})$.
\end{prop}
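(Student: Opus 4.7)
The plan is to reduce $TC_{G,n}(G)$ to the classical Lusternik--Schnirelmann category of the orbit space $G^n/G$, in the same spirit as Lemma \ref{cat_gcat}. First I would invoke the corollary after Proposition \ref{diag_1} to rewrite
$$ TC_{G,n}(G) = {}_{\bigtriangleup_n(G)}cat_G(G^n), $$
where $G^n$ carries the diagonal action of $G$ acting on itself by left translation. Since this action is free, the diagonal action on $G^n$ is free as well, so there is a single orbit type and $G^n \to G^n/G$ is a principal $G$-bundle. The map
$$ q\colon G^n \to G^{n-1}, \quad (g_1,\ldots,g_n)\mapsto (g_1^{-1}g_2,\,g_1^{-1}g_3,\,\ldots,\,g_1^{-1}g_n), $$
is an explicit realisation of the orbit map (it admits the obvious section $(h_1,\ldots,h_{n-1})\mapsto (e,h_1,\ldots,h_{n-1})$, so the bundle is trivial). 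The diagonal $\bigtriangleup_n(G)$ is a single $G$-orbit, and hence $q(\bigtriangleup_n(G))$ is a single point $\ast\in G^{n-1}$, with $q^{-1}(\ast)=\bigtriangleup_n(G)$.

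The core step is to establish a bijection between $\bigtriangleup_n(G)$-$G$-categorical coverings of $G^n$ and $\{\ast\}$-categorical coverings of $G^{n-1}$. In one direction, given a $G$-invariant open $U \subset G^n$ with a $G$-homotopy $H\colon U\times I \to G^n$ from the inclusion to a map with image in $\bigtriangleup_n(G)$, the composition $q\circ H$ is constant on orbits, so it descends to a homotopy $\bar H\colon q(U)\times I \to G^{n-1}$ from the inclusion to the constant map at $\ast$. Conversely, given an open $V\subset G^{n-1}$ and a homotopy contracting $V$ to $\ast$ in $G^{n-1}$, I would apply Palais' Covering Homotopy Theorem \cite[II.7.3]{Bre} (which applies because the action has one orbit type and the ambient hypotheses are met) to lift this contraction along $q$ to a $G$-homotopy of $U=q^{-1}(V)$. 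Because $q^{-1}(\ast)=\bigtriangleup_n(G)$, the lifted homotopy automatically terminates inside the diagonal, so $U$ becomes $G$-contractible to $\bigtriangleup_n(G)$.

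Since the bijection is clearly covering-preserving, the minimum cardinalities of the two types of coverings coincide. Using that $G^{n-1}$ is path-connected (so that the pointed and unpointed categories agree), I would conclude
$$ TC_{G,n}(G) = {}_{\bigtriangleup_n(G)}cat_G(G^n) = cat(G^{n-1}). $$
The main technical obstacle is the application of Palais' theorem: one has to verify that the terminal map of the lift genuinely lands in the single orbit $\bigtriangleup_n(G)$ rather than merely somewhere in its preimage neighbourhood, which is precisely where the identification $q^{-1}(\ast)=\bigtriangleup_n(G)$ is used. The paracompactness and Hausdorff hypotheses on $G$ guarantee that the principal bundle structure, and hence Palais' lifting, behaves well.
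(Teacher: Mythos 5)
Your proof is correct, but it is organized differently from the paper's. The paper first uses the connectedness of $G$ to show that any two orbits of the diagonal action on $G^n$ are joined by $G$-paths in both directions, so every orbit is $G$-contractible to the diagonal and hence ${}_{\bigtriangleup_n(G)}cat_G(G^n)=cat_G(G^n)$; only then does it pass to the quotient, proving $cat_G(G^n)=cat(G^n/G)$ by pushing covers down along the open orbit map and pulling categorical sets back up via local triviality of the principal bundle. You instead keep the diagonal in play all the way down: you observe that $q^{-1}(\ast)=\bigtriangleup_n(G)$ and match $\bigtriangleup_n(G)$-$G$-categorical covers of $G^n$ directly with $\ast$-pointed categorical covers of $G^{n-1}$, invoking Palais' lifting only for the upward direction, and you use connectedness solely to identify pointed and unpointed category downstairs. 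Your route has the advantage that the lifted homotopy lands in the diagonal automatically (the fibre over $\ast$ \emph{is} the diagonal), whereas the paper must argue separately that contracting to an arbitrary orbit is as good as contracting to the diagonal; moreover, your global section $(h_1,\dots,h_{n-1})\mapsto(e,h_1,\dots,h_{n-1})$ trivializes the bundle, so Palais' theorem could be replaced by the completely explicit lift $(g,v,t)\mapsto\big(g,\,g\cdot\bar{H}_t(v)\big)$, making the argument more elementary. What the paper's route buys in exchange is the intermediate identity $TC_{G,n}(G)=cat_G(G^n)$, which is of independent interest. One caveat shared by both arguments: each silently converts ``connected'' into ``path-connected'' ($G$ in the paper, $G^{n-1}$ in your proof), which is harmless for the groups one has in mind but not automatic for arbitrary topological groups.
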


\begin{proof}
First we remark that $G^n/G$ is homeomorphic to $G^{n-1}$ where the $G$-action on $G^n$ is diagonal. 
The homeomorphism is given by 
$$[g_1, g_2, \ldots, g_n] \mapsto (g_1^{-1}g_2, \ldots, g_1^{-1}g_n) .$$
Let ${\bf g}=(g_1, \ldots, g_n) \in G^n$ and ${\bf h}=(h_1, \ldots, h_n) \in G^n$. 
Let $\pi \colon G^n \to G^n/G$ be the orbit map. Since $G$ is connected, 
there exists a path $\alpha \colon I \to G^n$ such that $\alpha(0)={\bf g}$ and $\alpha(1) = {\bf h}$. 
Define $H \colon \oO({\bf g}) \times I \to G^n  $ by
$$ \Big( g (g_1, \ldots, g_n) , t \Big) \mapsto g \alpha(t).$$ 
Then $H$ is a $G$-path from the orbit $\oO({\bf g})$ to the orbit $\oO({\bf h})$. 
So the two orbits in the $G$ space $G^n$ are $G$-homotopic. 
In particular, any orbit in the $G$-space $G^n$ is $G$-contractible to the diagonal 
$\bigtriangleup_n(G) \subset G^n.$ 
Therefore, we obtain that  
$$  cat_G(G^n) \;\; = \;\; _{\bigtriangleup_n(G)} cat_G(G^n) \;\; = \;\;  TC_{G, n}(G) .$$

Let $\{U_1, \ldots, U_k\}$ be the $G$-categorical cover of $G^n$. 
Since $\pi$ is an open map, then $\{\pi(U_1), \ldots, \pi(U_k)\}$ is a categorical open cover of $G^n/G$. 
So we have
$$ cat(G^n/G) \leq cat_G(G^n) .$$

On the other hand, since $G$ is paracompact and Hausdorff (so is $G^n$), the map $\pi$ is a principal $G$-bundle (see \cite[Theorem II.5.8]{Bre}). So if the open subset $V$ is  contractible to a point $x$ in $G^n/G$, then $\pi^{-1}(V)$ is equivariantly homeomorphic to $V \times G$. Hence $\pi^{-1}(V)$ is a $G$-catagorical open subset of $G^n$. This implies that $cat_G(G^n) \leq cat(G^n/G)$. 
Therefore, we obtain $TC_{G,n}(G) = cat(G^{n-1})$. 

\end{proof}

We remark that when $n=2$, the Proposition \ref{free_act} is the same as
 \cite[Theorem 5.11]{CG}. However, the proof in this paper is different
 and it is for all $n$. 


\begin{prop}\label{rel_tc_catg}
If $X$ is $G$-connected, then $TC_{G,n}(X) \leq cat_G(X^n)$. In addition, if
$X^G \neq \emptyset$ then $TC_{G, n} (X) \leq n~ cat_G(X) -1$.
\end{prop}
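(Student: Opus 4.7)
My plan for the first inequality is to invoke the preceding Corollary, which identifies $TC_{G,n}(X)$ with ${}_{\bigtriangleup_n(X)} cat_G(X^n)$. It will then suffice to show that every $G$-categorical subset $U$ of $X^n$ (under the diagonal action) is in fact $G$-contractible to the full diagonal $\bigtriangleup_n(X)$. Given a witnessing $G$-homotopy $U \eqcont{G} \oO(\mathbf{x})$ for $\mathbf{x}=(x_1,\dots,x_n)$ with diagonal stabilizer $K = G_{x_1}\cap\cdots\cap G_{x_n}$, I will append a second $G$-homotopy from $\oO(\mathbf{x})$ into $\bigtriangleup_n(X)$ and concatenate. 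The key point is that each $x_j$ lies in $X^K$, so by $G$-connectedness I can select paths $\gamma_j\colon I \to X^K$ from $x_j$ to $x_1$. The product path $\gamma(t)=(\gamma_1(t),\dots,\gamma_n(t))$ then lies in $(X^n)^K$, and the formula $H'(g\mathbf{x},t) = g\gamma(t)$ descends to $\oO(\mathbf{x})\cong G/K$ (compactness of $G$ yielding the homeomorphism), is continuous and $G$-equivariant, and lands in $\bigtriangleup_n(X)$ at time $1$.

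For the second inequality, my plan is to chain the first inequality with an iteration of Proposition~\ref{GcatProd}. I will first establish the bridge inequality
\[
cat_G(X^n) \leq cat_{G^n}(X^n),
\]
where the left side uses the diagonal action and the right the product action. Given a $G^n$-categorical cover $\{U_i\}$ of $X^n$ with each $U_i$ contracting to a product orbit $Gx_{i,1}\times\cdots\times Gx_{i,n}$, that contraction is automatically diagonal-$G$-equivariant, so I only need to further contract each product orbit into a single diagonal $G$-orbit. Fixing $x_0\in X^G$ and choosing (by $G$-connectedness) paths $\alpha_{i,j}\colon I \to X^{G_{x_{i,j}}}$ from $x_{i,j}$ to $x_0$, the formula
\[
H\bigl((g_1 x_{i,1},\dots,g_n x_{i,n}),\,t\bigr) = \bigl(g_1\alpha_{i,1}(t),\dots,g_n\alpha_{i,n}(t)\bigr)
\]
will define a diagonal $G$-homotopy into the fixed-point orbit $\{(x_0,\dots,x_0)\}$. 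I will then iterate Proposition~\ref{GcatProd}, noting that each intermediate pair $(X^k,G^k)$ remains $G^k$-connected with $(x_0,\dots,x_0)\in (X^G)^k\neq\emptyset$, to conclude $cat_{G^n}(X^n) \leq n\,cat_G(X) - (n-1) \leq n\,cat_G(X) - 1$. Combining with the first part closes the argument.

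The main technical hurdle in both parts will be verifying that the orbit-level formulas are well-defined and continuous. This will boil down to the fact that the chosen paths live in the fixed-point sets of the relevant stabilizers, so stabilizer elements act trivially on the path values and the maps $g\mathbf{x}\mapsto g\gamma(t)$ pass unambiguously to the quotient $G/K$; compactness of $G$ then provides the homeomorphism $G/K\cong\oO(\mathbf{x})$ needed to transfer continuity. In short, the entire argument hinges on $G$-connectedness supplying stabilizer-compatible paths at the right moments, and on the fixed point $x_0$ providing a common target to absorb the mismatch between the diagonal $G$-action and the product $G^n$-action.
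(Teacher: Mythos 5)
Your argument is correct, and it is essentially a self-contained expansion of what the paper disposes of by citation: the paper's proof of the first inequality just points to the identification $TC_{G,n}(X)={}_{\Delta_n(X)}cat_G(X^n)$ together with the $n=2$ case \cite[Proposition 5.6]{CG}, and your contraction of a $G$-categorical subset onto $\Delta_n(X)$ via paths in $X^K$ for $K=G_{x_1}\cap\cdots\cap G_{x_n}$ is precisely the Colman--Grant argument generalized to $n$ factors, with the well-definedness on $G/K$ handled the same way. The one genuinely different choice is in the second inequality: you route through the \emph{product} action by proving the bridge inequality $cat_G(X^n)\leq cat_{G^n}(X^n)$ (using $x_0\in X^G$ to collapse a product orbit $Gx_{i,1}\times\cdots\times Gx_{i,n}$ to the diagonal fixed orbit) and then iterate Proposition \ref{GcatProd}, whereas the paper simply invokes \cite[Theorem 2.23]{BS}, which is a product formula for the diagonal action and so needs no bridge. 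Your detour costs a little extra work but makes visible exactly where $X^G\neq\emptyset$ is used, and it actually yields the sharper bound $n\,cat_G(X)-(n-1)$; your observation that $X^k$ stays $G^k$-connected is right (for closed $L\leq G^k$ one has $(X^k)^L=\prod_j X^{p_j(L)}$ with each $p_j(L)$ closed). Two small caveats: Proposition \ref{GcatProd} as stated carries a complete-normality hypothesis on the product, which neither you nor the proposition being proved addresses (the paper is equally silent, so this is an inherited gap rather than one you introduced); and for joint continuity of your orbit-level homotopies you should note that $G\times I\to G/K\times I$ is a quotient map because $G\to G/K$ is open, which is the one point your "compactness of $G$" remark does not quite pin down.
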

\begin{proof}
The first part follows from Proposition 4.5 and \cite[Proposition 5.6]{CG}.
The second part follows from \cite[Theorem 2.23]{BS}.
\end{proof}


\begin{prop}
Let $X$ be a $G$-connected space such that $X^{G} \neq \emptyset$. Then $$TC_{G,n}(X)
\leq n TC_{G, 2} -1.$$
\end{prop}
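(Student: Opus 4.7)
The plan is to derive the bound by chaining two facts: the already-established inequality $TC_{G,n}(X) \leq n \cdot cat_G(X) - 1$ from Proposition \ref{rel_tc_catg}, and the comparison $cat_G(X) \leq TC_{G,2}(X)$, valid under the hypothesis $X^G \neq \emptyset$. Since the first is already available in the paper, only the second needs attention.

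To produce the comparison, I would fix $x_0 \in X^G$ and convert a motion-planning cover into a $G$-categorical cover. Write $k = TC_{G,2}(X)$ and let $\{V_1, \ldots, V_k\}$ be $G$-invariant open sets covering $X^2$, each equipped with a $G$-section $s_j : V_j \to P_2(X)$ to the fibration $e_2$. The map $\iota_0 : X \to X^2$, $x \mapsto (x_0, x)$, is $G$-equivariant because $x_0$ is fixed, so the preimages $A_j := \iota_0^{-1}(V_j)$ are $G$-invariant open subsets covering $X$. Identifying $J_2$ with the interval $[-1,1]$ so that $1_1 \leftrightarrow -1$ and $1_2 \leftrightarrow 1$, the section $s_j$ assigns to each $(x_0, x) \in V_j$ a path from $x_0$ to $x$; the formula $H_j(x, t) := s_j(x_0, x)(1 - 2t)$ then defines a $G$-homotopy $H_j : A_j \times I \to X$ starting at the inclusion $A_j \hookrightarrow X$ and ending at the constant map with value $x_0$. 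Its $G$-equivariance uses the $G$-equivariance of $s_j$ together with $g x_0 = x_0$. Since $\mathcal{O}(x_0) = \{x_0\}$ is a single orbit, each $A_j$ is $G$-categorical, giving $cat_G(X) \leq k$.

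Chaining the two bounds then yields $TC_{G,n}(X) \leq n \cdot cat_G(X) - 1 \leq n \cdot TC_{G,2}(X) - 1$, as claimed. There is no serious obstacle; the only minor subtlety is the identification of the wedge $J_2$ with a standard interval so that a section of $e_2$ can be reinterpreted as a genuine path, after which $G$-equivariance of the resulting contraction is forced by the fixed-point hypothesis and the assumed equivariance of $s_j$.
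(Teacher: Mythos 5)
Your proposal is correct and follows the same route as the paper: combine the bound $TC_{G,n}(X) \leq n\, cat_G(X) - 1$ from the earlier proposition with the inequality $cat_G(X) \leq TC_{G,2}(X)$. The only difference is that the paper simply cites the latter inequality as a known fact, whereas you supply a correct proof of it (pulling back a motion-planning cover along $x \mapsto (x_0,x)$ for a fixed point $x_0 \in X^G$ and reversing the paths to contract each piece to the one-point orbit $\oO(x_0)$).
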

 
\begin{proof}
This follows from Proposition \ref{rel_tc_catg} and from the fact that $TC_{G,2}(X) \geq cat_G(X)$.
\end{proof}


\begin{prop}
Let $X$ be a $G$-connected topological group such that $G$ acts on $X$ by
topological group homomorphism. Then $TC_{G, 2}(X) = cat(X/G)$.
\end{prop}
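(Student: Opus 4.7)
The plan is to parallel the classical identity $TC(X)=cat(X)$ for a topological group $X$, now in the $G$-equivariant setting. The hypothesis that $G$ acts by homomorphisms forces $e\in X^G$ (so $\{e\}$ is a singleton orbit), and the group structure on $X$ converts contractions into motion planners via left translation. Two matching inequalities are needed.

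\emph{Upper bound $TC_{G,2}(X)\leq cat(X/G)$.} Starting from a categorical open cover $\{V_i\}_{i=1}^{k}$ of $X/G$ with contractions $K_i:V_i\times I \to X/G$, I would first use path-connectedness of $X/G$ to normalize so that $K_i(\cdot,1)\equiv [e]$. Setting $\tilde V_i = q^{-1}(V_i)$ (with $q:X\to X/G$ the orbit map), I would lift $K_i$ to a $G$-equivariant homotopy $\tilde K_i:\tilde V_i\times I\to X$ with $\tilde K_i(\cdot,0)=\mathrm{id}$ and $\tilde K_i(\cdot,1)\equiv e$, via a covering-homotopy argument in the spirit of Palais as used in the proof of Lemma~\ref{cat_gcat}. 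The $G$-invariant open sets $W_i=\mu^{-1}(\tilde V_i)$, where $\mu(x,y)=x^{-1}y$ is $G$-equivariant, then cover $X\times X$, and the formula
$$ s_i(x,y)(t)=x\cdot \tilde K_i\bigl(x^{-1}y,\,1-t\bigr) $$
defines a $G$-equivariant section of $e_2$ over $W_i$: one has $s_i(x,y)(0)=x$, $s_i(x,y)(1)=y$, and $s_i(gx,gy)(t)=g\cdot s_i(x,y)(t)$ because $g$ is a homomorphism.

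\emph{Lower bound $TC_{G,2}(X)\geq cat(X/G)$.} For the reverse inequality, take a $G$-invariant open cover $\{W_j\}_{j=1}^{m}$ of $X\times X$ equipped with $G$-sections $s_j$ of $e_2$ witnessing $TC_{G,2}(X)=m$. The inclusion $\iota: X\to X\times X$, $\iota(x)=(e,x)$, is $G$-equivariant because $e\in X^G$, so $V_j':=\iota^{-1}(W_j)$ is a $G$-invariant open cover of $X$. The assignment $H_j(x,t)=s_j(e,x)(1-t)$ is a $G$-equivariant contraction of $V_j'$ to $\{e\}$ in $X$. Since the orbit map $q$ is open and $H_j$ is $G$-equivariant, the projection descends to a contraction of $q(V_j')$ to $[e]$, yielding a categorical open cover of $X/G$ of size $m$ and hence $cat(X/G)\leq m$.

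\emph{Main obstacle.} The delicate step is the lifting in the upper bound: because the contraction in $X/G$ ends at $[e]$, its lift necessarily changes orbit type, so the standard form of Palais' Covering Homotopy Theorem (which assumes a single orbit type, as in Lemma~\ref{cat_gcat}) does not directly apply. To close the gap I would exploit the group-homomorphism structure of the $G$-action together with the topological-group structure on $X$---e.g., by building a $G$-invariant contractible neighborhood of $e$ in $X$ and propagating it via the multiplication on $X$---to produce the required equivariant null-homotopies without appealing to Palais in full generality.
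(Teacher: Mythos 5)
Your argument takes a completely different route from the paper, whose entire proof is a two-line citation: $TC_{G,2}(X)=cat_G(X)$ from \cite[Proposition 5.12]{CG}, followed by Lemma~\ref{cat_gcat} to identify $cat_G(X)$ with $cat(X/G)$. Your lower bound is correct and self-contained: restricting the $G$-sections to $\{e\}\times X$ gives $G$-contractions of the invariant sets $V_j'$ to the fixed point $e$, and since the orbit map is open and restricts to a quotient map on each invariant open set, these descend to contractions in $X/G$; in effect you prove $cat(X/G)\leq cat_G(X)\leq TC_{G,2}(X)$.

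The upper bound, however, contains a genuine gap --- the lifting step you yourself flag --- and it cannot be closed in the stated generality. Lifting a contraction $K_i:V_i\times I\to X/G$ ending at $[e]$ to a $G$-contraction of $q^{-1}(V_i)$ is precisely the assertion $cat_G(X)\leq cat(X/G)$, and the obstruction is not a technicality: the isotropy jumps at $e$ (which is fixed by all of $G$ since $G$ acts by homomorphisms), which is exactly the situation excluded by the one-orbit-type hypothesis of Lemma~\ref{cat_gcat} and of Palais's theorem. Indeed the inequality $TC_{G,2}(X)\leq cat(X/G)$ that your upper bound targets can fail: take $X=T^2$ with $G=\ZZ/2$ swapping the two circle factors (a continuous automorphism, and $T^2$ is $G$-connected since $(T^2)^G$ is the diagonal circle). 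Then $TC_{G,2}(T^2)\geq TC(T^2)=3$ by the Corollary following Proposition~\ref{equiv_defn_hetc}, while $T^2/G$ is a M\"obius band, so $cat(T^2/G)=2$. Hence no construction exploiting the multiplication on $X$ can produce the sections you want from a categorical cover of the orbit space alone. What the translation trick $s_i(x,y)(t)=x\cdot h_i(x^{-1}y,1-t)$ does prove --- and this is the content of \cite[Proposition 5.12]{CG} that the paper invokes --- is $TC_{G,2}(X)\leq cat_G(X)$, where $h_i$ is a $G$-contraction of a $G$-categorical subset of $X$ to $e$. The remaining passage from $cat_G(X)$ to $cat(X/G)$ requires an additional hypothesis such as a single orbit type; this is the step your proposal cannot supply, and it is also the weak point of the paper's own one-line argument.
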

\begin{proof}
This follows from Lemma \ref{cat_gcat} and \cite[Proposition 5.12]{CG}.
\end{proof}

\begin{example}\label{eg_tc_s1}
Consider $S^3 = \{(z_1, z_2) \in \CC^2 ~|~ |z_1|^2 + |z_2|^2 = 1\}$ with the
$S^1$-action $ S^1 \times S^3 \to S^3 $ defined by
$$ (\alpha, (z_1, z_2)) \to (\alpha z_1, z_2).$$ 
With this action $S^3$ is $S^1$-connected. Also we have 
$$(S^3)^{S^1} = \Big\{ \; (0, z_2) \in S^3 \; \Big\} \cong S^1 .$$ 
Thus by \cite[Theorem 3.3]{BS2}, we have $cat_{S^1}(S^3) \geq 2$.
Let $(0, x), (0, y) \in (S^3)^{S^1} \subset S^3$ with $x \neq y$. Then $S^3 - \{ (0, x) \}$
and $S^3 - \{ (0, y) \} $ are $S^1$-categorical subsets which form a covering for $S^3$. So we have $cat_{S^1}(S^3) 
\leq 2$. Thus 
$$  cat_{S^1}(S^3)=2 .$$ 
Therefore, from the results in Section 4 of \cite{Rud}, Proposition
\ref{rel_tc_subgp} and Proposition \ref{rel_tc_catg} we obtain that
$$n \leq TC_{S^1, n} (S^3) \leq 2n -1.$$
\end{example}

\section{Higher invariant topological complexity}\label{Sec:invTC}
In this section we introduce and study the higher invariant topological complexity,
and discuss the connections between the higher equivariant and invariant topological
complexity. Moreover, we compute these two invariants for some particular spaces. 


The following definition is a particular case of Definition \ref{agls-cat} and the motivation behind this definition can be found in the introduction of \cite{LM}.
\begin{defn}
Let $G$ be a topological group and $Y$ be a $G$-space. Let $\daleth_n(Y)$ be the saturation of the diagonal $\Delta_n(Y) \subset Y^n$ with respect to the $G^n$-action,
$$ \daleth_n(Y) = G^n  \cdot \Delta_n(Y) \subset Y^n. $$
We define the higher invariant topological complexity of $Y$ as the following,
$$ TC^{G, n}(Y) = _{\daleth_n (Y)}cat_{G^n} Y^n.$$
\end{defn}
When $n=2$, then $TC^{G, 2}(Y)$ is the invariant topological complexity as in \cite{LM}. There exists an equivalent definition of higher invariant topological complexity similar to the idea in \cite[Lemma 3.8]{LM}. Note that the higher invariant topological complexity is a $G$-homotopy invariant.


Let $Y$ be a $G$-space. Define 
$$(PY)^n_{Y/G} = \Big\{ (\alpha_1, \ldots, \alpha_n) \in (PY)^n  \; : \;  G \cdot\alpha_i(0) = 
G \cdot \alpha_j(0) \;\; \mbox{for}\;\;  1 \leq i, j \leq n  \Big\}.$$
Note that $(PY)^n_{Y/G}$ is a $G^n$-space under the following action 
$$(g_1, \ldots, g_n) \cdot (\alpha_1, \ldots, \alpha_n)= (g_1\alpha_1, \ldots, g_n\alpha_n).$$ 
Therefore, the map 
$$\mathfrak{q}_n: (PY)^n_{Y/G} \to Y^n $$
defined by $\mathfrak{q}_n(\alpha_1, 
\ldots, \alpha_n) = \big( \alpha_1(1), \ldots, \alpha_n(1) \big)$ is a $G^n$-map.


\begin{prop}\label{Gn_fib}
The map $\mathfrak{q}_n$ is a $G^n$-fibration.
\end{prop}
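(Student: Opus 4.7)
The plan is to verify the $G^n$-equivariant homotopy lifting property directly, exploiting the fact that the subspace condition defining $(PY)^n_{Y/G}$ constrains only the \emph{starting} points of the paths, while $\mathfrak{q}_n$ depends only on the \emph{endpoints}. The standard path-lifting construction never moves the starting point of a path, so it will automatically respect the subspace condition.

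Concretely, suppose we are given a $G^n$-space $Z$, a $G^n$-map $g_0 : Z \to (PY)^n_{Y/G}$ with components $g_0(z) = (\alpha_1^z, \ldots, \alpha_n^z)$, and a $G^n$-homotopy $g' : Z \times I \to Y^n$ with components $g'(z,t) = (y_1(z,t), \ldots, y_n(z,t))$, compatible in the sense that $\alpha_i^z(1) = y_i(z, 0)$ for each $i$ and $z$. The first step is to define, component-wise, the candidate lift
\[
\tilde{g}(z,t) = (\tilde{\alpha}_1^{z,t}, \ldots, \tilde{\alpha}_n^{z,t}),
\qquad
\tilde{\alpha}_i^{z,t}(s) =
\begin{cases}
\alpha_i^z\bigl((1+t)s\bigr) & \text{if } 0 \leq s \leq \frac{1}{1+t}, \\[2pt]
y_i\bigl(z, (1+t)s - 1\bigr) & \text{if } \frac{1}{1+t} \leq s \leq 1.
\end{cases}
\]
The two branches agree on the overlap $s = 1/(1+t)$ by the compatibility condition, so each $\tilde{\alpha}_i^{z,t}$ is a well-defined continuous path in $Y$, and joint continuity of $(z,t) \mapsto \tilde{g}(z,t)$ follows as in the classical construction.

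The next step is to check the three properties we need. At $s = 0$ we have $\tilde{\alpha}_i^{z,t}(0) = \alpha_i^z(0)$, independently of $t$; in particular the $G$-orbit condition $G\cdot \tilde{\alpha}_i^{z,t}(0) = G \cdot \tilde{\alpha}_j^{z,t}(0)$ is inherited from $g_0(z) \in (PY)^n_{Y/G}$, so $\tilde{g}$ does land in the subspace $(PY)^n_{Y/G}$. At $s = 1$ we have $\tilde{\alpha}_i^{z,t}(1) = y_i(z,t)$, which gives $\mathfrak{q}_n \circ \tilde{g} = g'$. At $t = 0$ the second branch collapses onto the endpoint of $\alpha_i^z$, yielding $\tilde{g}(z,0) = g_0(z)$.

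Finally, $G^n$-equivariance is where the componentwise nature of the construction matters: for $(g_1, \ldots, g_n) \in G^n$, the equivariance of $g_0$ gives $\alpha_i^{(g_1,\ldots,g_n)\cdot z} = g_i \alpha_i^z$ and that of $g'$ gives $y_i((g_1,\ldots,g_n)\cdot z, t) = g_i y_i(z,t)$, so each of the two branches of the formula for $\tilde{\alpha}_i$ transforms by $g_i$ alone, and hence $\tilde{g}((g_1,\ldots,g_n)\cdot z, t) = (g_1,\ldots,g_n) \cdot \tilde{g}(z,t)$. The only point requiring any care is checking well-definedness at the seam $s = 1/(1+t)$ and the verification that the subspace condition is preserved; both reduce to the observation that the lift keeps starting points fixed, which is the main conceptual content of the argument.
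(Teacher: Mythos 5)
Your proof is correct, and it rests on exactly the same observation that drives the paper's argument: the defining condition of $(PY)^n_{Y/G}$ constrains only the starting points of the paths, which the standard concatenation lift never moves, so the lift of the free path fibration on $Y^n$ restricts to this subspace. The paper merely packages this by identifying $(PY)^n_{Y/G}$ as the preimage of $\daleth_n(Y)\times Y^n$ under the start--end evaluation map and deferring to the analogous proof in Lubawski--Marzantowicz, whereas you carry out the explicit verification; the content is the same.
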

\begin{proof}
Consider the map 
$$\bar{\mathfrak{q}}_n: (Y^n \times Y^n)^I \to Y^{n} \times Y^n$$
defined by $\bar{\mathfrak{q}}_n(\alpha)=(\alpha(0), \alpha(1)).$ Note that
$$\bar{\mathfrak{q}}_n^{-1}(\daleth_n(Y) \times Y^n)= (PY)^n_{Y/G} .$$
 One can show that $\mathfrak{q}_n= pr \circ \bar{\mathfrak{q}}_n$ restricted on
 $\bar{\mathfrak{q}}_n^{-1}(\daleth_n(Y) \times Y^n)$. Then the proof is analogous
 to the proof of \cite[Proposition 3.7]{LM}.
\end{proof}

Note that for $n=2$, the Proposition \ref{Gn_fib} is the same as  \cite[Proposition 3.7]{LM}.

\begin{lemma}
Let $Y$ be a $G$-space and $n \geq 2$. Then the following statements are equivalent:
\begin{enumerate}
\item $TC^{G, n}(Y) \leq k$;

\item $_{\daleth_n (Y)}cat_{G^n} Y^n \leq k$;

\item There exist $k$-many $G^n$-invariant open subsets $V_1, \ldots, V_k$ which form a covering for 
$Y^n$ and for each of which there exists a $G^n$-map $\beta_j : V_j \to (PY)^n_{Y/G}$ such that $\mathfrak{q}_n \circ 
\beta_j = \iota_j  : V_j \hookrightarrow Y^n$ for $j=1, \ldots, k$.

\item There exist $k$-many $G^n$-invariant open subsets $V_1, \ldots, V_k$ which form a covering for 
$Y^n$ and for each of which there exists a $G^n$-map $\bar{\beta}_j : V_j \to (PY)^n_{Y/G}$ such that $\mathfrak{q}_n \circ 
\bar{\beta}_j $ is $G^n$-homotopic to $ \iota_j : V_j \hookrightarrow Y^n$ for $j=1, \ldots, k$.
\end{enumerate}
\end{lemma}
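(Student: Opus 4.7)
The plan is to establish the chain of implications $(1) \Leftrightarrow (2) \Rightarrow (3) \Rightarrow (4) \Rightarrow (2)$, mirroring the argument of \cite[Lemma 3.8]{LM} but adapted to the $n$-fold product setting. The equivalence $(1) \Leftrightarrow (2)$ is immediate from the definition $TC^{G,n}(Y) = \;_{\daleth_n(Y)}cat_{G^n}(Y^n)$.

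For $(2) \Rightarrow (3)$, I would start with a $\daleth_n(Y)\mbox{-}G^n$-categorical covering $\{V_1, \ldots, V_k\}$ of $Y^n$. By definition, for each $V_j$ there is a $G^n$-homotopy $H^j : V_j \times I \to Y^n$ with $H^j_0$ the inclusion and $H^j_1(V_j) \subseteq \daleth_n(Y)$. The key observation is that a $G^n$-homotopy from the inclusion $V_j \hookrightarrow Y^n$ ending inside $\daleth_n(Y)$ gives precisely, for each point $(y_1, \ldots, y_n) \in V_j$, an $n$-tuple of paths $(\alpha_1, \ldots, \alpha_n)$ with $\alpha_i(1) = y_i$, starting at a point of $\daleth_n(Y)$, i.e.\ with $G \cdot \alpha_i(0) = G \cdot \alpha_j(0)$ for all $i,j$, after reversing time. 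More precisely, I define $\beta_j : V_j \to (PY)^n_{Y/G}$ by sending $(y_1, \ldots, y_n) \in V_j$ to the tuple of reversed coordinate paths of $H^j$. Continuity and $G^n$-equivariance follow directly from those of $H^j$, and the endpoint condition at time $t=1$ gives $\mathfrak{q}_n \circ \beta_j = \iota_j$ on the nose.

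The implication $(3) \Rightarrow (4)$ is trivial. For $(4) \Rightarrow (3)$ I would invoke Proposition \ref{Gn_fib}: since $\mathfrak{q}_n$ is a $G^n$-fibration, the $G^n$-homotopy lifting property allows one to replace the $G^n$-homotopy-section $\bar{\beta}_j$ by a strict $G^n$-section $\beta_j$, by lifting the $G^n$-homotopy $\mathfrak{q}_n \circ \bar{\beta}_j \simeq_{G^n} \iota_j$ starting with $\bar{\beta}_j$. For $(3) \Rightarrow (2)$, given a strict section $\beta_j$ on $V_j$, I define the required $G^n$-homotopy $H^j : V_j \times I \to Y^n$ by $H^j((y_1, \ldots, y_n), t) = (\beta_j(y_1,\ldots,y_n)_1(1-t), \ldots, \beta_j(y_1,\ldots,y_n)_n(1-t))$, which evaluates at $t = 0$ to the inclusion and at $t = 1$ to a tuple in $\daleth_n(Y)$ by the defining property of $(PY)^n_{Y/G}$. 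Equivariance is again automatic.

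The main obstacle is keeping careful track of the $G^n$-equivariance in the translation between homotopies ending in $\daleth_n(Y)$ and sections of $\mathfrak{q}_n$: one must verify that the component paths indeed satisfy $G\cdot\alpha_i(0) = G\cdot\alpha_j(0)$, which corresponds exactly to the endpoints lying in a single $G^n$-orbit of $\Delta_n(Y)$, and that the product $G^n$-action is preserved by coordinatewise construction. Once this bookkeeping is correct, the rest of the argument reduces to the fibration lifting property established in Proposition \ref{Gn_fib}.
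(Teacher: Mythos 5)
Your proposal is correct and follows exactly the route the paper intends: the paper's proof simply cites \cite[Lemma 3.8]{LM} "with some modification in the spaces and maps," and your argument is precisely that adaptation — time-reversal translating $G^n$-homotopies into $\daleth_n(Y)$ into strict sections of $\mathfrak{q}_n$ (using Lemma \ref{oeq} to identify $\daleth_n(Y)$ with tuples whose coordinates share an orbit), plus the $G^n$-fibration property of Proposition \ref{Gn_fib} to upgrade homotopy-sections to strict ones. Your write-up supplies more detail than the paper itself does, and the bookkeeping you flag (coordinatewise equivariance and the orbit condition at $t=0$) checks out.
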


\begin{proof}
The proof is similar to the proof of  \cite[Lemma 3.8]{LM} with some
modification in the spaces and maps.
\end{proof}



\begin{prop}
Let $Y$ be a $G$-space. We have $TC_n(Y^G) \leq TC^{G, n} (Y) $ for $n \geq 2$.
\end{prop}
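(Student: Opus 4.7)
The plan is to start from a $G^n$-equivariant motion planner for $Y$ (in the sense of item (3) of the preceding lemma) and restrict it to $(Y^G)^n$, showing that equivariance forces the restricted paths to live in $Y^G$ and to share a common starting point, hence to glue into a section of the classical higher motion planning fibration $e_n \colon P_n(Y^G) \to (Y^G)^n$.

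Suppose $TC^{G,n}(Y) \leq k$. By the preceding lemma there are $G^n$-invariant open sets $V_1, \ldots, V_k$ covering $Y^n$ together with $G^n$-equivariant maps $\beta_j \colon V_j \to (PY)^n_{Y/G}$ such that $\mathfrak{q}_n \circ \beta_j = \iota_j$. I would set $W_j := V_j \cap (Y^G)^n$; since $G^n$ acts trivially on $(Y^G)^n$ and $V_j$ is $G^n$-invariant, the $W_j$ are open in $(Y^G)^n$ and form a covering.

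The key step is the following observation. Pick $(y_1, \ldots, y_n) \in W_j$ and write $\beta_j(y_1, \ldots, y_n) = (\alpha_1, \ldots, \alpha_n)$. For any $g \in G$ one has $(g, \ldots, g) \cdot (y_1, \ldots, y_n) = (y_1, \ldots, y_n)$ because each $y_i \in Y^G$. By $G^n$-equivariance of $\beta_j$ this forces $(g\alpha_1, \ldots, g\alpha_n) = (\alpha_1, \ldots, \alpha_n)$, i.e.\ each path $\alpha_i$ has image contained in $Y^G$. Consequently the $G$-orbits appearing in the condition $G \cdot \alpha_i(0) = G \cdot \alpha_\ell(0)$ are singletons, so $\alpha_1(0) = \cdots = \alpha_n(0)$, and the tuple $(\alpha_1, \ldots, \alpha_n)$ can be glued along the wedge point of $J_n$ to define a continuous map $s_j(y_1, \ldots, y_n) \colon J_n \to Y^G$ in $P_n(Y^G)$. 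The equation $\mathfrak{q}_n \circ \beta_j = \iota_j$ then translates directly into $e_n \circ s_j = \iota_j$, so $\{W_j, s_j\}_{j=1}^k$ exhibits $TC_n(Y^G) \leq k$.

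The main obstacle is the bookkeeping at the third step: verifying that $G^n$-equivariance really does force each individual $\alpha_i$ to be pointwise $G$-fixed (rather than merely the tuple being invariant as a whole), and then pinning down that the starting points agree on the nose rather than just up to the $G$-action. Once that is established, continuity of $s_j$ follows from continuity of $\beta_j$ together with the universal property of $J_n$, and the remaining verifications are routine.
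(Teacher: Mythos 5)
Your proof is correct and follows exactly the route the paper intends: the paper's proof is only a citation of \cite[Corollary 3.26]{LM}, and your argument (restrict a strict $G^n$-equivariant motion planner to $(Y^G)^n$, use diagonal equivariance to force each path into $Y^G$, note the orbit condition then pins the common initial point, and glue along the wedge point of $J_n$) is precisely the expected adaptation of that proof to the higher setting. The key step you flag is sound, since $g\alpha_i=\alpha_i$ for all $g$ means pointwise fixedness of each $\alpha_i$, and the remaining continuity check is routine because $Y^{J_n}$ embeds in $\prod_i Y^{I_i}$.
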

\begin{proof}
This is similar to the proof of  \cite[Corollary 3.26]{LM}.
\end{proof}


\begin{lemma}\label{oeq}
Let $Y$ be a $G$-space. Then $(y_1, \ldots, y_n) \in \daleth_n(Y)$ if and only if for $ 1 \leq i , j \leq n $ we have $\oO(y_i) = \oO(y_j)$.
\end{lemma}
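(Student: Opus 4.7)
The plan is to prove both directions directly from the definition
$$\daleth_n(Y) = G^n \cdot \Delta_n(Y) = \{(g_1 y,\ldots,g_n y) : g_1,\ldots,g_n \in G,\ y \in Y\} \subset Y^n.$$

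For the forward direction, I would assume $(y_1,\ldots,y_n) \in \daleth_n(Y)$ and unpack the definition: there exist $y \in Y$ and $g_1,\ldots,g_n \in G$ with $y_i = g_i y$ for every $i$. Since $y_i \in \oO(y)$ for each $i$, every $\oO(y_i)$ coincides with $\oO(y)$, hence $\oO(y_i) = \oO(y_j)$ for all $1 \leq i,j \leq n$.

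For the converse, I would assume $\oO(y_i) = \oO(y_j)$ for all $i,j$. Taking $y = y_1$, the hypothesis gives, for each $i \in \{1,\ldots,n\}$, an element $g_i \in G$ such that $y_i = g_i y_1$; in particular one may take $g_1 = e$. Then
$$(y_1,\ldots,y_n) = (g_1 y_1, g_2 y_1, \ldots, g_n y_1) = (g_1,\ldots,g_n) \cdot (y_1,y_1,\ldots,y_1),$$
which lies in $G^n \cdot \Delta_n(Y) = \daleth_n(Y)$, as required.

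There is no real obstacle here; the lemma is a direct definitional unraveling of the saturation. The only thing worth being careful about is choosing a consistent basepoint on the orbit in the converse direction (I pick $y_1$) so that the tuple of group elements produces the required expression as a $G^n$-translate of the diagonal element $(y_1,\ldots,y_1) \in \Delta_n(Y)$.
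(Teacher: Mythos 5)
Your proof is correct and follows essentially the same route as the paper's: unpack the saturation $\daleth_n(Y) = G^n\cdot\Delta_n(Y)$ in the forward direction, and in the converse pick $y_1$ as the common orbit representative and write $(y_1,\ldots,y_n)$ as a $G^n$-translate of $(y_1,\ldots,y_1)$. No differences worth noting.
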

\begin{proof}
Let $(y_1, \ldots, y_n) \in \daleth_n(Y)$, so there exist $y \in Y$, 
and $g_j \in G$ for $ 1 \leq j \leq n $ such that $(y_1, \ldots, y_n) = (g_1y, \ldots, g_ny)$. 
Therefore, we obtain that $\oO(y_j) = \oO(y)=\oO(y_i)$ for $ 1 \leq i,j \leq n $.

Conversely, let $\oO(y_i) = \oO(y_j)$ for $ 1 \leq i,j \leq n $. 
Then for each $j \in \{1, \ldots, n\}$, there exists a $g_j \in G$ such that $y_j = g_j y_1$. So we obtain that 
$$ (y_1, \ldots, y_n) \in \daleth_n(Y). $$
\end{proof}


\begin{lemma}\label{not_int_dia}
If $Y$ has more than one minimal orbit class, then $\daleth_n(Y)$ does not intersect all minimal
orbit classes of the $ G^n $-space $Y^n$.
\end{lemma}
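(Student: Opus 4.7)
The plan is to first pin down the orbit-class structure of $Y^n$ under the diagonal $G^n$-action and then exhibit a specific minimal orbit class that is disjoint from $\daleth_n(Y)$. The $G^n$-orbit of $(y_1,\ldots,y_n)\in Y^n$ is exactly the product $\oO(y_1)\times\cdots\times\oO(y_n)$, so the orbits of the $G^n$-space $Y^n$ are Cartesian products of $G$-orbits of $Y$. The key structural claim I would establish is that this product decomposition descends to orbit classes: namely, $\oO(y_1)\times\cdots\times\oO(y_n) \eqcont{G^n} \oO(z_1)\times\cdots\times\oO(z_n)$ if and only if $\oO(y_i)\eqcont{G}\oO(z_i)$ for each $i$.

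The ``if'' direction is immediate from taking the product of coordinatewise $G$-homotopies. For the ``only if'' direction, given a $G^n$-homotopy $H$ I would define coordinate homotopies by fixing the other entries, namely
$$H_i(w,t)=\pi_i\bigl(H(y_1,\ldots,y_{i-1},w,y_{i+1},\ldots,y_n,t)\bigr),\qquad w\in\oO(y_i),$$
and verify $G$-equivariance by plugging in the element of $G^n$ supported in the $i$-th coordinate and using that $\pi_i$ intertwines that $G$-action with the given $G$-action on $Y$. Since $H_1$ maps into $\oO(z_1)\times\cdots\times\oO(z_n)$, composing with $\pi_i$ lands in $\oO(z_i)$, giving $\oO(y_i)\eqcont{G}\oO(z_i)$. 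A symmetric argument yields a $G$-path back. Consequently, every minimal orbit class of $Y^n$ is a product of $n$ minimal orbit classes of $Y$.

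Once this structural fact is in hand, the lemma follows directly. Assuming $Y$ has two distinct minimal orbit classes $[\oO(x_1)]\neq[\oO(x_2)]$, I would take the minimal orbit class of $Y^n$ represented by $\oO(x_1)\times\oO(x_2)\times\cdots\times\oO(x_2)$ and suppose for contradiction that some $(y_1,\ldots,y_n)$ lies in both this orbit class and $\daleth_n(Y)$. Membership in the orbit class forces $[\oO(y_1)]=[\oO(x_1)]$ and $[\oO(y_2)]=[\oO(x_2)]$, while Lemma~\ref{oeq} forces $\oO(y_1)=\oO(y_2)$, hence $[\oO(y_1)]=[\oO(y_2)]$. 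Together these give $[\oO(x_1)]=[\oO(x_2)]$, a contradiction.

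The main obstacle will be the structural claim that orbit classes of $Y^n$ factor as products of orbit classes of $Y$; the equivariance bookkeeping for the extracted coordinate homotopies $H_i$ is the only step that requires genuine care. After that, Lemma~\ref{oeq} makes the disjointness conclusion essentially a one-line contradiction.
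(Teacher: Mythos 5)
Your proof is correct and follows essentially the same route as the paper's: both pick the minimal orbit class of $Y^n$ represented by $\oO(x_1)\times\oO(x_2)^{n-1}$ for two distinct minimal classes of $Y$ and derive a contradiction from the fact (Lemma~\ref{oeq}) that all coordinates of a point of $\daleth_n(Y)$ lie in a single orbit. The only difference is that the paper simply cites \cite[Proposition 2.20]{BS2} for the structural fact that products of minimal orbit classes are minimal orbit classes of the $G^n$-space $Y^n$, whereas you prove that product decomposition directly (and your coordinate-extraction argument for it is sound).
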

\begin{proof}
Assume that $[\oO(m)]$ and $ [\oO(n)]$ are two distinct minimal orbit classes of $Y$. 
Then by  \cite[Proposition 2.20]{BS2}, the orbit class $\big[\oO(m) \times \oO(n)^{n-1}\big]$ is a minimal orbit class in $Y^n$. If $\daleth_n(Y)$ intersect $\big[\oO(m) \times \oO(n)^{n-1}\big]$, then there exists $u, v_1, \ldots, v_{n-1} \in Y$ such that
$$ \oO(u) \times \oO(v_1) \times \cdots \times \oO(v_{n-1}) \in \big[\oO(m) \times \oO(n)^{n-1} \big] $$ 
and 
$$ \oO(u) \times \oO(v_1) \times \cdots \times \oO(v_{n-1}) \subset \daleth_n(Y). $$
Hence there exist $y \in Y$ and $g, h \in G$ such that
$u=gy \in [\oO(m)]$ and $v_1=hy \in [\oO(n)]$. 
Thus $ \big[\oO(u) \big] = \big[\oO(v_1) \big]$, which implies that $ \big[\oO(m) \big] = \big[ \oO(n) \big]$ and contradicts the assumption. 
\end{proof}

\begin{theorem}\label{comp_tcg}
If $Y$ has more than one minimal orbit class, then $TC^{G, n}(Y) = \infty$. 
\end{theorem}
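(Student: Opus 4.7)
The plan is to argue by contradiction, using Lemma \ref{not_int_dia} to locate an obstruction orbit class in $Y^{n}$ and exploiting its minimality in the orbit diagram to rule out any $G^{n}$-contraction to $\daleth_{n}(Y)$.

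First, I would suppose that $TC^{G,n}(Y) = k < \infty$ and unpack the definition: there exist $G^{n}$-invariant open subsets $V_{1}, \ldots, V_{k}$ covering $Y^{n}$, each equipped with a $G^{n}$-homotopy $H^{(j)} : V_{j} \times I \to Y^{n}$ such that $H^{(j)}_{0}$ is the inclusion $V_{j} \hookrightarrow Y^{n}$ and $H^{(j)}_{1}(V_{j}) \subseteq \daleth_{n}(Y)$. Next, by Lemma \ref{not_int_dia}, since $Y$ has at least two distinct minimal orbit classes $[\oO(m)]$ and $[\oO(n)]$, the orbit class $C = [\oO(m) \times \oO(n)^{n-1}]$ is a minimal orbit class in the $G^{n}$-space $Y^{n}$ that does not intersect $\daleth_{n}(Y)$. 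Pick any representative point $p \in C$ and a set $V_{j}$ from the cover containing $p$.

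The key step is to restrict the homotopy $H^{(j)}$ to the orbit $\oO(p)$. Because each $H^{(j)}_{t}$ is a $G^{n}$-equivariant map and $\oO(p)$ is a single $G^{n}$-orbit, the image $H^{(j)}_{t}(\oO(p))$ is the single orbit $\oO(H^{(j)}_{t}(p))$. In particular, the restriction
\[
H^{(j)}\big|_{\oO(p) \times I} : \oO(p) \times I \longrightarrow Y^{n}
\]
is a $G^{n}$-homotopy with $H^{(j)}_{0}$ the inclusion $\oO(p) \hookrightarrow Y^{n}$ and $H^{(j)}_{1}(\oO(p)) \subseteq \oO(H^{(j)}_{1}(p))$. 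Hence $\oO(p) \eqcont{G^{n}} \oO(H^{(j)}_{1}(p))$, which by definition means $[\oO(p)] \geq [\oO(H^{(j)}_{1}(p))]$ in the poset of orbit classes of $Y^{n}$.

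Finally, since $[\oO(p)] = C$ is a minimal orbit class, this inequality forces $[\oO(H^{(j)}_{1}(p))] = [\oO(p)] = C$, so $H^{(j)}_{1}(p) \in C$. On the other hand, by construction $H^{(j)}_{1}(p) \in \daleth_{n}(Y)$, so $C \cap \daleth_{n}(Y) \neq \emptyset$, contradicting Lemma \ref{not_int_dia}. Therefore no finite $\daleth_{n}(Y)$-$G^{n}$-categorical cover of $Y^{n}$ can exist, and $TC^{G,n}(Y) = \infty$. The main obstacle I anticipate is verifying cleanly that the restricted homotopy qualifies as a $G$-path in the sense of the orbit-class formalism, i.e.\ that $H^{(j)}_{1}$ does not ``spread'' the orbit $\oO(p)$ across multiple orbits; this is however immediate from $G^{n}$-equivariance once phrased correctly.
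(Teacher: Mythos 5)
Your proof is correct and follows essentially the same route as the paper: both arguments rest on Lemma \ref{not_int_dia} producing a minimal orbit class of the $G^n$-space $Y^n$ that misses $\daleth_n(Y)$, and on the principle that ${}_A\mathrm{cat}_{G^n}(Y^n)=\infty$ when $A$ misses a minimal orbit class. The only difference is that the paper outsources that principle to the cited result \cite[Theorem 4.7]{BS2}, whereas you prove it directly (and correctly) by restricting the contracting $G^n$-homotopy to the orbit $\oO(p)$, using invariance of $V_j$ to see that the whole orbit lies in $V_j$ and equivariance to see that the homotopy is a $G^n$-path, so minimality forces the terminal orbit back into the excluded class.
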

\begin{proof}
Since $\daleth_n(Y)$ does not intersect all minimal orbit classes of the
 $G^n$-space $Y^n$, by  \cite[Theorem 4.7]{BS2} we have
$$ TC^{G, n}(Y) \; = \;  _{\daleth(Y)}cat_{G^n}(Y^n) \; = \; \infty.$$
\end{proof}

Even though it seems $ TC^{G, n}(Y) $ is meaningful on a specific category, it satisfies our natural expectation, Proposition \ref{prop:g-acts-free}, which is not true for higher equivariant topological complexity. We note that Lemma \ref{not_int_dia} and Theorem
\ref{comp_tcg} are proved in \cite[Section 4]{BS2} for $n=2$. 

\begin{prop}
$TC^{G, n}(Y) \leq TC^{G, n+1}(Y)$ for all $n \geq 2$.
\end{prop}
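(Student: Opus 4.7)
The plan is to mirror the proof of the analogous monotonicity statement for higher equivariant topological complexity given above, replacing $P_n(Y)$ with $(PY)^n_{Y/G}$ and $e_n$ with $\mathfrak{q}_n$. The key players will be a forgetful map on the path-space side and a coordinate-duplication map on the base-space side, both compatible with a suitable inclusion $G^n \hookrightarrow G^{n+1}$.

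Concretely, I would introduce three maps. First, the forgetful map
\[ \varphi_n \colon (PY)^{n+1}_{Y/G} \to (PY)^n_{Y/G}, \qquad (\alpha_1,\ldots,\alpha_{n+1}) \mapsto (\alpha_1,\ldots,\alpha_n), \]
which lands in the right space since $G\cdot\alpha_i(0)=G\cdot\alpha_j(0)$ for all $i,j\le n+1$ implies the same condition restricted to $i,j\le n$. Second, the coordinate-doubling map
\[ \mathfrak{y}_n \colon Y^n \to Y^{n+1}, \qquad (y_1,\ldots,y_n) \mapsto (y_1,\ldots,y_n,y_n). \]
Third, I would use the embedding $\iota \colon G^n \hookrightarrow G^{n+1}$ given by $(g_1,\ldots,g_n)\mapsto(g_1,\ldots,g_n,g_n)$; along this embedding, both $\mathfrak{y}_n$ and $\varphi_n$ are equivariant. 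Consequently, for any $G^{n+1}$-invariant open set $V\subset Y^{n+1}$, the preimage $U := \mathfrak{y}_n^{-1}(V)\subset Y^n$ is $G^n$-invariant.

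Given a $G^{n+1}$-equivariant section $\beta\colon V\to (PY)^{n+1}_{Y/G}$ with $\mathfrak{q}_{n+1}\circ\beta = \iota_V$ produced by $TC^{G,n+1}(Y)\le k$, I would set
\[ \gamma := \varphi_n \circ \beta \circ \mathfrak{y}_n \colon U \to (PY)^n_{Y/G}. \]
A direct check using the identities above shows $\gamma$ is $G^n$-equivariant and that $\mathfrak{q}_n\circ\gamma = \iota_U$, since evaluating the first $n$ forgotten paths at $1$ recovers the first $n$ coordinates of $\mathfrak{y}_n(y_1,\ldots,y_n)$. Applying this to a cover $V_1,\ldots,V_k$ of $Y^{n+1}$ yields a cover $U_j = \mathfrak{y}_n^{-1}(V_j)$ of $Y^n$ (because $(y_1,\ldots,y_n,y_n)$ lies in some $V_j$) with sections $\gamma_j$, proving $TC^{G,n}(Y)\le k$.

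The main subtlety is not computational but bookkeeping: one must choose the correct embedding $G^n\hookrightarrow G^{n+1}$ so that $\mathfrak{y}_n$ is equivariant and then verify that $G^{n+1}$-invariance of $V$ really does imply $G^n$-invariance of $U$ under the full $G^n$-action (not only under the image of $\iota$). This follows because $G^{n+1}$-invariance of $V$ is invariance under each coordinate separately, so in particular under the image of $\iota$, which is exactly what is needed after pulling back along $\mathfrak{y}_n$. One could alternatively phrase the argument using the homotopy-section version of the characterization lemma, but the strict section formulation above is already sufficient since $\mathfrak{q}_n$ is a $G^n$-fibration by Proposition \ref{Gn_fib}.
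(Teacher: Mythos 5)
Your proof is correct and follows essentially the same route as the paper's: the same forgetful map $(PY)^{n+1}_{Y/G} \to (PY)^{n}_{Y/G}$, the same coordinate-doubling map $Y^n \to Y^{n+1}$, and the same pullback of the cover and sections. The only divergence is your choice of embedding $G^n \hookrightarrow G^{n+1}$, namely $(g_1,\ldots,g_n)\mapsto(g_1,\ldots,g_n,g_n)$, where the paper instead regards $G^n$ as $G^n\times\{e\}$; your choice is actually the one under which the coordinate-doubling map is equivariant, so on this bookkeeping point your version is the more careful of the two.
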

\begin{proof}
Consider $G^n$ as the subgroup $G^n \times \{e\}$ of $G^{n+1}= G^n \times G$. 
Then the map $$\xi_n : (PY)^{n+1}_{Y/G} \to (PY)^n_{Y/G}$$
defined by $(\alpha_1, \ldots, \alpha_n, \alpha_{n+1}) \to (\alpha_1, \ldots, \alpha_n)$
is a surjective continuous $G^n$-map. On the other hand we have a continuous $G^n$-map
$$\phi_n \colon Y^n \to Y^{n+1}$$ defined by $(y_1, \ldots, y_n) \mapsto
(y_1, \ldots, y_n, y_n)$. Let $V \subset Y^{n+1}$ be a $G^{n+1}$-invariant open subset
such that there is a $G^{n+1}$-map 
$$ s \colon V \to (PY)^{n+1}_{Y/G} $$ 
with
$\mathfrak{q}_{n+1} \circ s \simeq_{G^{n+1}} id_V$. So $U = \phi_n^{-1}(V)$
is a $G^n$-invariant open subset of $Y^n$. Then the map
$$U \xrightarrow{\phi_n} V \xrightarrow{s} (PY)^{n+1}_{Y/G} \xrightarrow{\xi_n} 
(PY)^n_{Y/G}$$ is a $G^n$-homotopy section. This proves the Proposition.

\end{proof}

\begin{prop}\label{prop:g-acts-free}
If $G$ acts freely on $Y$, then $ TC^{G, n}(Y) = TC_{n}(Y/G)$ for $n \geq 2$.
\end{prop}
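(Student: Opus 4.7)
The plan is to reduce $TC^{G,n}(Y) = {}_{\daleth_n(Y)}cat_{G^n}(Y^n)$ to the quotient side by exploiting the fact that the free action of $G$ on $Y$ makes the orbit map $\pi \colon Y^n \to Y^n/G^n = (Y/G)^n$ a principal $G^n$-bundle. The first observation I need is the identification $\pi^{-1}(\Delta_n(Y/G)) = \daleth_n(Y)$: by Lemma \ref{oeq}, a tuple $(y_1,\ldots,y_n)$ lies in $\daleth_n(Y)$ iff $\mathcal{O}(y_i)=\mathcal{O}(y_j)$ for all $i,j$, iff $\pi(y_1,\ldots,y_n)$ lies in the diagonal. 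Also, from Proposition \ref{equiv_defn_hetc} applied with trivial group, $TC_n(Y/G) = {}_{\Delta_n(Y/G)}cat((Y/G)^n)$, so the task is to prove the equality of these two relative LS-categories.

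For the upper bound $TC^{G,n}(Y)\leq TC_n(Y/G)$, I would take an open cover $V_1,\ldots,V_k$ of $(Y/G)^n$ together with homotopies $H_j\colon V_j\times I \to (Y/G)^n$ starting at the inclusion and ending inside $\Delta_n(Y/G)$. Setting $U_j = \pi^{-1}(V_j)$ gives a $G^n$-invariant open cover of $Y^n$. Each $H_j$ preserves the orbit structure in the sense of Palais, so the Covering Homotopy Theorem of Palais (\cite[II.7.3]{Bre}), already used in Lemma \ref{cat_gcat}, lifts $H_j$ to a $G^n$-equivariant homotopy $\tilde{H}_j \colon U_j \times I \to Y^n$ starting at the inclusion. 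Because $H_j$ ends in $\Delta_n(Y/G)$, the lift $\tilde{H}_j$ ends in $\pi^{-1}(\Delta_n(Y/G)) = \daleth_n(Y)$, showing $U_j$ is $G^n$-contractible to $\daleth_n(Y)$.

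For the reverse inequality $TC^{G,n}(Y)\geq TC_n(Y/G)$, I would start with a $G^n$-invariant open cover $U_1,\ldots,U_k$ of $Y^n$ with $G^n$-homotopies $\tilde{H}_j\colon U_j\times I \to Y^n$ going from the inclusion into $\daleth_n(Y)$. Since $\pi$ is open, $V_j := \pi(U_j)$ are open and cover $(Y/G)^n$. Because each $\tilde{H}_j$ is equivariant and $U_j = \pi^{-1}(V_j)$, it descends to a continuous homotopy $H_j \colon V_j \times I \to (Y/G)^n$ with $H_j(\cdot,0)$ equal to the inclusion, and $H_j(\cdot,1)$ taking values in $\pi(\daleth_n(Y)) = \Delta_n(Y/G)$. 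This gives an $\Delta_n(Y/G)$-categorical open cover of $(Y/G)^n$ of the same cardinality.

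The main obstacle is verifying that Palais's covering homotopy theorem applies to the diagonal $G^n$-action on $Y^n$; this is where the freeness hypothesis on $G\curvearrowright Y$ is essential, since it guarantees that $G^n$ acts freely on $Y^n$ with a single orbit type and that $\pi$ is a principal $G^n$-bundle, so the hypotheses of \cite[II.7.3]{Bre} are met. Once this mechanism is in place, the two directions are symmetric: pulling back opens and lifting homotopies in one direction, pushing forward opens and descending homotopies in the other, with the identification $\pi^{-1}(\Delta_n(Y/G)) = \daleth_n(Y)$ being the bridge between the two relative categories.
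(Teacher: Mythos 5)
Your proof is correct, but it takes a genuinely different route from the paper's. The paper argues at the level of fibrations and sections: it uses the $G^n$-fibration $\mathfrak{q}_n \colon (PY)^n_{Y/G} \to Y^n$ of Proposition \ref{Gn_fib} together with the fibration $e_n$ over $(Y/G)^n$, observes that any $G^n$-invariant local section of $\mathfrak{q}_n$ descends to a local section of $e_n$ (giving $TC_n(Y/G) \leq TC^{G,n}(Y)$), and then defers the reverse inequality to the argument of \cite[Theorem 3.10]{LM}. You instead work entirely with the relative-category formulation ${}_{\daleth_n(Y)}cat_{G^n}(Y^n)$, with the identification $\pi^{-1}(\Delta_n(Y/G)) = \daleth_n(Y)$ from Lemma \ref{oeq} as the bridge, lifting homotopies via Palais's Covering Homotopy Theorem in one direction and descending equivariant homotopies in the other. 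What your approach buys is self-containedness: the inequality the paper outsources to \cite{LM} is handled here by the same lifting mechanism already used in Lemma \ref{cat_gcat}, and your observation that freeness forces a single orbit type (so the orbit-structure-preservation hypothesis of \cite[II.7.3]{Bre} is automatic) is exactly the right justification. Two small points you leave tacit, both at the level of what the paper itself also glosses over: the descended homotopy $H_j$ in the reverse direction is continuous because $\pi|_{U_j}\times id_I$ is a quotient map (compactness of $G^n$ plus local compactness of $I$), and the applicability of \cite[II.7.3]{Bre} for a general compact topological group rather than a compact Lie group is being assumed in the same way the paper assumes it in Lemma \ref{cat_gcat}.
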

\begin{proof}
The orbit space of the $G^n$-action on $Y^n$ is $(Y/G)^n$. Thus for a
$G^n$-invariant open subset $V$ in $Y^n$, the quotient space $V/G^n$ is an open subset in $(Y/G)^n$. 
Also we have the following commutative diagrams of surjective continuous maps,
$$
\begin{CD}
(PY)^G_n @>\mathfrak{q}_n>> Y^n\\
@VVV @VVV\\
P_nY @>e_n>> (Y/G)^n
\end{CD}
$$
where the vertical arrows are orbit maps. Note that any $G^n$-invariant local section
for $\mathfrak{q}_n$ descends to a local section for $e_n$. Therefore, we have
$$TC_{n}(Y/G) \leq TC^{G, n}(Y) .$$ 
By analogy to the proof of  \cite[Theorem 3.10]{LM},  
 one can show the other inequality. 
\end{proof}

\begin{remark}
In \cite{BK} the authors introduced a topological invariant called effective 
topological complexity. Their sequence of effective topological complexities
decreases and Farber's topological complexity is an upper bound for that. But the 
sequences of the higher equivariant topological complexity and higher invariant topological complexity 
that we introduce in this paper
are both increasing sequences, and sometimes they are strictly increasing. 
\end{remark}

\begin{theorem}
The inclusion $ \daleth_n(X) \subset X^n $ is a $G^n$-cofibration if
$G$ is a finite group.
\end{theorem}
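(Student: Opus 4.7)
My plan is to exhibit a $G^n$-equivariant Str{\o}m (NDR) structure on the pair $(X^n, \daleth_n(X))$, since by the equivariant form of Str{\o}m's theorem, valid for any compact Lie group and in particular for the finite group $G^n$, a closed invariant inclusion is a $G^n$-cofibration precisely when such a structure exists. Explicitly, I would produce a $G^n$-invariant halo function $u \colon X^n \to I$ with $u^{-1}(0) = \daleth_n(X)$, together with a $G^n$-equivariant homotopy $H \colon X^n \times I \to X^n$ satisfying $H_0 = \mathrm{id}$, $H$ stationary on $\daleth_n(X)$, and $H_1(u^{-1}[0,1)) \subseteq \daleth_n(X)$.

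The first step is to verify that $\daleth_n(X)$ is closed in $X^n$. By Lemma \ref{oeq}, $\daleth_n(X)$ is exactly the preimage of the diagonal $\Delta_n(X/G) \subset (X/G)^n$ under the continuous map $\pi^n \colon X^n \to (X/G)^n$ induced by the orbit projection $\pi \colon X \to X/G$. For a finite group $G$ acting on the Hausdorff space $X$, the orbit space $X/G$ is itself Hausdorff, so $\Delta_n(X/G)$ is closed, and hence $\daleth_n(X)$ is closed in $X^n$.

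Next I would use finiteness of $G^n$ to write $\daleth_n(X) = \bigcup_{\mathbf{g}}\, \mathbf{g}\cdot \Delta_n(X)$ as a finite union of translates of the full diagonal, with $\mathbf{g}$ running over a transversal of the subgroup $\Delta(G) \leq G^n$. Starting from any Str{\o}m structure $(u_0, H^{(0)})$ for the non-equivariant inclusion $\Delta_n(X) \hookrightarrow X^n$, I would symmetrize. The formula $u(y) = \min_{\mathbf{h} \in G^n} u_0(\mathbf{h}^{-1} y)$ is automatically $G^n$-invariant, continuous as a finite minimum of continuous functions, and has zero set exactly $\daleth_n(X)$. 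The corresponding equivariant deformation $H$ is then patched together from the translates $\mathbf{g} \cdot H^{(0)}(\mathbf{g}^{-1}(-),\,-)$ via a $G^n$-invariant partition of unity subordinate to the cover $\{\mathbf{g} \cdot u_0^{-1}[0,1)\}_{\mathbf{g} \in G^n}$ of the halo $u^{-1}[0,1)$.

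The main obstacle is the consistency of this patching over the overlaps, i.e., points lying in two distinct translates $\mathbf{g} \cdot \Delta_n(X)$ and $\mathbf{g}' \cdot \Delta_n(X)$; such overlaps occur precisely when the common orbit of the coordinates meets some non-trivial fixed-point set $X^H$, and near them the translated homotopies could a priori pull in incompatible directions. The key observation that unlocks the argument is that any such overlap point already lies in $\daleth_n(X)$, so the desired $H$ is required to be stationary there in any case; with a $G^n$-invariant partition of unity the patching is therefore automatically consistent. Finiteness of $G^n$ is used twice and essentially: it is what makes the pointwise minimum defining $u$ continuous, and what keeps the combinatorics of the overlap stratification finite so that the local Str{\o}m data assembles into a global $G^n$-equivariant NDR structure.
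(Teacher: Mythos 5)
Your overall strategy---realize $\daleth_n(X)$ as the finite union of translates $\mathbf{g}\cdot\Delta_n(X)$, $\mathbf{g}\in G^n$, and assemble an equivariant Str{\o}m structure from non-equivariant data---has the same skeleton as the paper's argument, and your closedness observation and the symmetrized halo function $u(y)=\min_{\mathbf{h}}u_0(\mathbf{h}^{-1}y)$ are fine. But the step where you ``patch the translated homotopies via a $G^n$-invariant partition of unity'' is a genuine gap. A partition of unity lets you combine maps into $[0,1]$ or into a convex set; it does not let you average the homotopies $\mathbf{g}\cdot H^{(0)}(\mathbf{g}^{-1}(-),-)$, which take values in the arbitrary space $X^n$. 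And your claim that consistency is automatic because overlap points already lie in $\daleth_n(X)$ misses where the difficulty actually sits: the problem is not the value of $H$ \emph{at} a point of $\mathbf{g}\cdot\Delta_n(X)\cap\mathbf{g}'\cdot\Delta_n(X)$, but its behaviour on a neighbourhood of such a point, where the two translated deformations push nearby points toward different translates of the diagonal and there is no linear structure with which to interpolate between them.

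The gluing mechanism that actually works---and the one the paper (following Lubawski--Marzantowicz, Theorem 3.15) relies on---is the union theorem for cofibrations: a finite union $\bigcup_{\mathbf{g}}\mathbf{g}\cdot\Delta_n(X)$ is cofibred in $X^n$ provided each translate \emph{and each of their intersections} is cofibred. This is precisely why the paper's proof computes the isotropy of a point $(x,g_2x,\ldots,g_nx)$: the pairwise and higher intersections of translates of the diagonal are identified with fixed-point sets of the form $X^{\langle g \rangle}$ (and their products), and one needs a hypothesis such as $X$ being a $G$-ANR (implicit in the theorem, and explicit in the corollary that follows it) to know that the corresponding inclusions are cofibrations. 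Your proposal never verifies that the overlaps are cofibred, and without that the local data cannot be assembled. You should also make the ANR-type hypothesis explicit even at the first step of your construction: for a bare Hausdorff $G$-space the non-equivariant diagonal $\Delta_n(X)\subset X^n$ need not admit a Str{\o}m structure at all, so the ``starting'' non-equivariant pair $(u_0,H^{(0)})$ you invoke does not exist in the stated generality.
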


\begin{proof}
Note that
\begin{align*}
\daleth_n(X) &= \Big\{ (g_1x, g_2x, \ldots, g_nx) \in X^n \; : \;  (g_1, g_2, \ldots, g_n) \in G^n \Big\}\\
&= \Big\{ (x, g_2x, \ldots, g_n x) \in X^n \; : \;  (g_2, \ldots, g_{n}) \in G^{n-1} \Big\}.
\end{align*}
Let $h=(h_1, h_2, \ldots, h_n) \in G^n$ and $\<h\>$ be the cyclic subgroup of $G$ generated by
$h$. 
Then we have $h(x, g_2x, \ldots, g_nx) = (x, g_2x, \ldots, g_nx) $ if and only if for $i=2, \ldots, n$ we have 
$$ x \in X^{\<h\>} \quad\mbox{and} \quad x \in X^{\<g_i^{-1}h_ig_i\>}  .$$ 
The result follows by using a similar argument as in the proof of  \cite[Theorem 3.15]{LM}, 
and the result is same when $n=2$.
\end{proof}

\begin{corollary}
Let $G$ be a compact abelian topological group and $Y$ be a compact $G$-ANR
such that for any closed subgroup $H$ there is a finite subgroup $H_G$ of $G$
satisfying $X^H=X^{H_G}$.
Then the inclusion $\daleth_n(Y) \subset Y^n $ is a $G^n$-cofibration. 
\end{corollary}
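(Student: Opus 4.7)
The plan is to reduce this corollary to the finite-group case established in the preceding theorem, using the hypothesis as the bridge. Since $G$ is abelian, for any element $h \in G$ the cyclic subgroup $\langle h \rangle$ is a closed (abelian) subgroup, and conjugation is trivial, so the fixed-point characterization appearing in the proof of the preceding theorem, namely
$$ h \cdot (x, g_2 x, \ldots, g_n x) = (x, g_2 x, \ldots, g_n x) \iff x \in Y^{\langle h_1 \rangle} \cap \bigcap_{i=2}^{n} Y^{\langle h_i \rangle}, $$
simplifies. The hypothesis then permits us to replace every such cyclic closed subgroup $\langle h_i\rangle$ by a finite subgroup $\langle h_i \rangle_G$ of $G$ without changing the corresponding fixed point set. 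So the decomposition of $\daleth_n(Y)$ that drove the finite-group proof becomes available in this more general setting.

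Next I would carry out the cofibration verification. Using Lemma~\ref{oeq} together with the fixed-point description above, I would write $\daleth_n(Y)$ as the union of $G^n$-translates of sets of the form $\{x\} \times (G_2 x) \times \cdots \times (G_n x)$ where the stabilizer data is controlled by fixed point sets $Y^K$ for closed (now, via the hypothesis, effectively finite) subgroups $K$. Since $Y$ is a compact $G$-ANR and $G$ is compact abelian, the fixed point sets $Y^{K}$ for finite subgroups $K \leq G$ are themselves ANRs, hence so are the relevant product-like subsets in $Y^n$.

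Then I would build a $G^n$-invariant halo (Strøm structure): a $G^n$-neighborhood $U$ of $\daleth_n(Y)$ in $Y^n$, a $G^n$-map $u : Y^n \to I$ with $u^{-1}(0) = \daleth_n(Y)$ and $u^{-1}[0,1) \subset U$, and a $G^n$-homotopy $H: U \times I \to Y^n$ deforming $U$ into $\daleth_n(Y)$ rel $\daleth_n(Y)$. The $G$-ANR hypothesis supplies equivariant local retraction data, which I would glue using a $G^n$-invariant partition of unity (available on the compact Hausdorff $G^n$-space $Y^n$). This realizes $(Y^n, \daleth_n(Y))$ as a $G^n$-NDR pair, which is equivalent to the inclusion being a $G^n$-cofibration.

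The main obstacle, as in the analogue \cite[Corollary 3.16]{LM}, is justifying that one really may restrict attention to finite subgroups: the cofibration property a priori depends on the full orbit structure of the compact group $G^n$, and the halo construction must be $G^n$-equivariant on the nose, not merely approximately. The hypothesis $Y^H = Y^{H_G}$ is precisely what allows the finite-group ANR machinery to govern the full compact-abelian situation, and once that reduction is made, the construction in the preceding theorem applies essentially verbatim.
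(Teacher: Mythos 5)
Your proposal follows the same route the paper intends: the corollary is stated without proof as an immediate consequence of the preceding theorem, the hypothesis $Y^H=Y^{H_G}$ being exactly what lets the fixed-point analysis of $\daleth_n(Y)$ (via the cyclic subgroups $\langle h_i\rangle$, which for abelian $G$ require no conjugation) be carried out with finite subgroups, so that the compact $G$-ANR machinery from the finite-group case applies verbatim. The only small point to add is that $\langle h_i\rangle$ need not be closed in a compact topological group, so you should pass to its closure $\overline{\langle h_i\rangle}$ (which has the same fixed-point set by continuity of the action) before invoking the hypothesis, which is stated only for closed subgroups.
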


\begin{prop}\label{prop2}
Let $Y$ be a $G$-space. Then we have 
$$ TC^{G,n}(Y) \; \leq \;  _{A}cat_{G^n}X^n,$$
where $ A = \oO(y)^n$ for some $y \in Y$.
\end{prop}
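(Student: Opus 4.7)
The plan is to deduce this from the monotonicity property of the relative equivariant LS-category noted right after Definition \ref{agls-cat}, namely that for $G^n$-invariant subsets $A \subseteq B$ of $Y^n$ one has $_B cat_{G^n}(Y^n) \leq \,_A cat_{G^n}(Y^n)$. So everything reduces to verifying a set-theoretic inclusion $A \subseteq \daleth_n(Y)$ together with the appropriate invariance.

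First I would unwind the definitions. By definition,
\[
\daleth_n(Y) \;=\; G^n \cdot \Delta_n(Y) \;=\; \big\{ (g_1 x, g_2 x, \ldots, g_n x) \in Y^n \;:\; x \in Y,\; g_i \in G \big\},
\]
while $A = \mathcal{O}(y)^n = \{(g_1 y, \ldots, g_n y) : g_i \in G\}$. Taking $x = y$ in the description of $\daleth_n(Y)$ shows $A \subseteq \daleth_n(Y)$. Moreover, each factor $\mathcal{O}(y)$ is $G$-invariant, so $A$ is $G^n$-invariant under the componentwise action, which is exactly what is needed for $_A cat_{G^n}$ to make sense.

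Now I would apply the monotonicity statement recorded in Section \ref{Sec:od}: since $A \subseteq \daleth_n(Y)$ are both $G^n$-invariant subsets of the $G^n$-space $Y^n$, we get
\[
_{\daleth_n(Y)} cat_{G^n}(Y^n) \;\leq\; _{A} cat_{G^n}(Y^n).
\]
By the definition of higher invariant topological complexity, the left-hand side equals $TC^{G,n}(Y)$, which yields the desired inequality.

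There is essentially no obstacle here beyond unwinding the definitions; the content is the observation that any orbit product $\mathcal{O}(y)^n$ sits inside the saturated diagonal $\daleth_n(Y)$, so a categorical cover with pieces $G^n$-contractible to $\mathcal{O}(y)^n$ is automatically a categorical cover with pieces $G^n$-contractible to $\daleth_n(Y)$. If one wanted to be extra careful, one could verify this by hand: given a $G^n$-homotopy $H : U \times I \to Y^n$ with $H_0 = \iota_U$ and $H_1(U) \subseteq \mathcal{O}(y)^n \subseteq \daleth_n(Y)$, the same $H$ witnesses that $U$ is $G^n$-contractible to $\daleth_n(Y)$.
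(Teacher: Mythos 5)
Your proof is correct and follows essentially the same route as the paper: both establish the inclusion $\oO(y)^n \subseteq \daleth_n(Y)$ and then invoke the monotonicity of $_{A}cat_{G^n}$ with respect to inclusion of invariant subsets (the paper cites \cite[Lemma 2.13 (1)]{LM} for this, which is the same fact you quote from Section \ref{Sec:od}). Your additional checks of $G^n$-invariance of $A$ and the explicit homotopy argument are fine but not needed beyond what the cited monotonicity already provides.
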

\begin{proof}
Since $(y, \ldots, y) \in \bigtriangleup_n(Y)$, we obtain that
 $\oO(y)^n \subseteq \daleth_n(Y)$.
The result follows from  \cite[Lemma 2.13 (1)]{LM}.
\end{proof}
We remark that Proposition \ref{prop2} extends \cite[Proposition 3.23]{LM}
and \cite[Proposition 4.9]{BS2}

\begin{prop}
Let $Y$ and $Z$ be a $G$- and $K$-space, respectively. 
If $\daleth_n(Y) \subset Y^n$ is a $G^n$-cofibration  and 
$\daleth_n(Z) \subset Z^n$ is a $K^n$-cofibration, then $$TC^{G \times K, n}(Y \times Z) \leq
TC^{G, n}(Y) + TC^{K, n}(Z) -1.$$ 
\end{prop}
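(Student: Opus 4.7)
The plan is to reduce this product inequality to the product formula for the $A\mbox{-}G$-LS-category, namely Theorem 2.14 of \cite{LM}, which is already cited in the paper. The key conceptual point is that, under the canonical homeomorphism $(Y \times Z)^n \cong Y^n \times Z^n$, the saturated diagonal for the product is precisely the product of the two saturated diagonals.

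First I would verify the identification
\[
\daleth_n(Y \times Z) \;\cong\; \daleth_n(Y) \times \daleth_n(Z)
\]
as $(G \times K)^n \cong G^n \times K^n$-spaces. Unwinding the definitions, a point in $\daleth_n(Y \times Z)$ has the form $\bigl((g_1,k_1)(y,z), \ldots, (g_n,k_n)(y,z)\bigr) = \bigl((g_1 y,\ldots,g_n y),(k_1 z,\ldots,k_n z)\bigr)$ for some $(y,z) \in Y \times Z$ and $(g_i,k_i) \in G \times K$, and the coordinates in $Y$ and $Z$ are manifestly independent. This gives the required equivariant identification, and also shows that the $(G \times K)^n$-action on $(Y \times Z)^n$ corresponds to the product action of $G^n \times K^n$ on $Y^n \times Z^n$.

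With this identification in place, the invariant of interest becomes
\[
TC^{G \times K, n}(Y \times Z) \;=\; {}_{\daleth_n(Y) \times \daleth_n(Z)}\, cat_{G^n \times K^n}\bigl(Y^n \times Z^n\bigr).
\]
Now I would apply the product formula for the $A\mbox{-}G$-LS-category (Theorem 2.14 of \cite{LM}) with ambient spaces $Y^n$ and $Z^n$, acting groups $G^n$ and $K^n$, and invariant subsets $\daleth_n(Y)$ and $\daleth_n(Z)$. The hypotheses of that theorem require precisely that $\daleth_n(Y) \subset Y^n$ is a $G^n$-cofibration and $\daleth_n(Z) \subset Z^n$ is a $K^n$-cofibration, which are exactly the assumptions of the proposition. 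The conclusion then reads
\[
{}_{\daleth_n(Y) \times \daleth_n(Z)}\, cat_{G^n \times K^n}(Y^n \times Z^n) \;\leq\; {}_{\daleth_n(Y)}\, cat_{G^n}(Y^n) + {}_{\daleth_n(Z)}\, cat_{K^n}(Z^n) - 1,
\]
and the right-hand side is $TC^{G,n}(Y) + TC^{K,n}(Z) - 1$ by definition.

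The main obstacle is bookkeeping rather than substance: one must carefully track the identification of $(G \times K)^n$ with $G^n \times K^n$ and verify that the product of $G^n$- and $K^n$-invariant open covers (each equipped with a contraction to the respective saturated diagonal) assembles into a $G^n \times K^n$-invariant cover of $Y^n \times Z^n$ that is contractible to $\daleth_n(Y) \times \daleth_n(Z)$. Once the identifications are in place, this is immediate from the cited product formula, and the cofibration hypothesis enters only to guarantee the applicability of that formula.
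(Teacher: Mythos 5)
Your proposal is correct and follows essentially the same route as the paper: identify $\daleth_n(Y \times Z)$ with $\daleth_n(Y) \times \daleth_n(Z)$ under the shuffle homeomorphism $(Y\times Z)^n \cong Y^n \times Z^n$, then invoke the product formula for ${}_Acat_G$ from Lubawski--Marzantowicz (the paper cites their Corollary 2.16 rather than Theorem 2.14, but this is only a matter of which form of the product formula carries the cofibration hypotheses). Your explicit verification of the equivariant identification is a welcome addition to what the paper leaves as a one-line remark.
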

\begin{proof}
Note that $\daleth_n(Y \times Z) = \daleth_n(Y) \times \daleth_n(Z)$. 
So by  \cite[Corollary 2.16]{LM}, the result follows.
\end{proof}

\begin{example}
We adhere the notation of Example \ref{eg_tc_s1}. Let $A= (0, x)^n$. Then applying 
Proposition \ref{prop2} we have $TC^{S^1, n}(S^3) \leq 2^n$. 
\end{example}


\section{Examples}
In this section we compute the equivariant LS-category and give some bounds
for the equivariant topological complexity of moment angel complexes. Moment angle
complexes, which are special type of polyhedral product, are the center of
interest in toric topology. Several properties of moment angle complexes can be
found in \cite{BBCG, BP, DJ}. 
Moreover, the computation of LS-category and topological complexity of moment angle 
complexes are given in \cite{BG, Kam}. 
 
Let $K$ be a simplicial complex on $[m]=\{1, \ldots, m\}$ vertices. For each
simplex $\sigma \in K$, we define 
$$(D^2, S^1)^{\sigma} = \Big\{ (x_1, \ldots, x_m)
\in (D^2)^m ~~: ~~ x_i \in S^1 = \partial{D^2} ~ \mbox{when}~ i \notin \sigma  \Big\}.$$
The polyhedral product 
\begin{equation}\label{mont_ang}
\mathcal{Z}_K = \bigcup_{\sigma \in K} (D^2, S^1)^{\sigma} \subset (D^2)^m
\end{equation}
 is called the moment angle complex of $K$. The space $\mathcal{Z}_K$
has a natural $T^m = (S^1)^m$ action and is a manifold if $K$ is a triangulated
sphere, see \cite[Lemma 6.13]{BP}.

\begin{prop}
If $S$ be the set of all maximal simplices of $K$, then $cat_{T^m}(\mathcal{Z}_K)=|S|$.
\end{prop}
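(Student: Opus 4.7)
The plan is to prove the matching inequalities $cat_{T^m}(\mathcal{Z}_K) \leq |S|$ and $cat_{T^m}(\mathcal{Z}_K) \geq |S|$. For the upper bound, I first observe that every simplex of $K$ lies in a maximal one, so (\ref{mont_ang}) collapses to $\mathcal{Z}_K = \bigcup_{\sigma \in S}(D^2, S^1)^{\sigma}$. Each piece is a $T^m$-invariant subset, and the straight-line deformation
\[
H\big((x_1,\ldots,x_m),t\big)_i \;=\; \begin{cases} (1-t)x_i, & i \in \sigma,\\ x_i, & i \notin \sigma,\end{cases}
\]
is $T^m$-equivariant (rotation commutes with real scaling), takes values in $(D^2, S^1)^{\sigma} \subseteq \mathcal{Z}_K$, and at $t = 1$ collapses everything onto the single $T^m$-orbit $\{0\}^{\sigma} \times (S^1)^{m-|\sigma|}$ (whose isotropy is $T^{\sigma}$). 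Hence each $(D^2, S^1)^{\sigma}$ is a $T^m$-categorical subset of $\mathcal{Z}_K$, and $\{(D^2, S^1)^{\sigma}\}_{\sigma \in S}$ is a $T^m$-categorical cover of cardinality $|S|$. Note that Definition \ref{equils-cat} does not impose openness on members of the cover, so the closed sets $(D^2, S^1)^{\sigma}$ are admissible.

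For the matching lower bound, I will classify the orbit classes of the $T^m$-action. The isotropy of $x \in \mathcal{Z}_K$ is $T^{\tau(x)}$, where $\tau(x) := \{i : x_i = 0\}$ is a simplex of $K$. For each $\tau \in K$, radial interpolations in the coordinates $i \notin \tau$ (carried out inside $(D^2, S^1)^{\sigma}$ for any maximal $\sigma \supseteq \tau$) produce $G$-paths in both directions between an arbitrary orbit with isotropy $T^{\tau}$ and the distinguished orbit $\oO_{\tau} := \{0\}^{\tau} \times (S^1)^{m - |\tau|}$. Thus there is exactly one orbit class $[\oO_{\tau}]$ for each simplex $\tau \in K$. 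Because $T^m$ is abelian, subconjugation of isotropy groups reduces to plain containment, so the partial order is $[\oO_{\tau}] \geq [\oO_{\tau'}] \iff T^{\tau} \subseteq T^{\tau'} \iff \tau \subseteq \tau'$, and the minimal orbit classes are precisely $\{[\oO_{\sigma}] : \sigma \in S\}$.

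To turn this count into a lower bound, I will invoke the general principle underlying \cite[Theorem 3.3]{BS2}: if $U \subset \mathcal{Z}_K$ is $T^m$-contractible to an orbit $\oO$, then restricting the contracting homotopy to any orbit $\oO(x) \subseteq U$ yields a $G$-path $\oO(x) \eqcont{T^m} \oO$, so $[\oO(x)] \geq [\oO]$. In particular, $[\oO]$ must coincide with every minimal orbit class intersecting $U$. Therefore, a $T^m$-categorical cover of $\mathcal{Z}_K$ must contain one member per minimal orbit class, giving $cat_{T^m}(\mathcal{Z}_K) \geq |S|$, and the two bounds combine to the claimed equality.

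The main technical obstacle is the step that orbits sharing the isotropy $T^{\tau}$ belong to a single orbit class when $\tau$ is not maximal, because the naive radial homotopy can leave $(D^2, S^1)^{\tau}$; one must perform the interpolation within a larger piece $(D^2, S^1)^{\sigma}$ for some maximal $\sigma \supsetneq \tau$, checking that the homotopy and its reverse remain inside $\mathcal{Z}_K$. Everything else is bookkeeping about subtori of $T^m$ and the face poset of $K$.
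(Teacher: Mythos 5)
Your proof is correct and follows essentially the same route as the paper: the cover of $\mathcal{Z}_K$ by the pieces $(D^2,S^1)^{\sigma}$ for maximal $\sigma$, each $T^m$-equivariantly deformable onto the orbit $(S^1)^{\sigma}$, gives the upper bound, and the count of minimal orbit classes gives the lower bound. You simply supply more detail than the paper does --- the explicit scaling homotopy, the classification of orbit classes via the face poset and isotropy subtori, and the remark that the cover need not consist of open sets (the paper asserts openness of these pieces, which is neither needed under Definition \ref{equils-cat} nor true in general in the subspace topology).
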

\begin{proof}
Note that if $\tau$ is a face of $\sigma$ in $K$, then $ (D^2, S^1)^{\tau} 
\subseteq (D^2, S^1)^{\sigma}$. So we have 
$$\mathcal{Z}_K = \bigcup_{\sigma \in S} (D^2, S^1)^{\sigma} \subset (D^2)^m.$$ 
The topology on $\mathcal{Z}_K$ is the subspace
topology of $(D^2)^m$. Also any simplex of $K$ is a face of a maximal simplex.
So the set 
$$ \Big\{ (D^2, S^1)^{\sigma} \; : \;  \sigma \in S  \Big\} $$ 
is an open covering for $\mathcal{Z}_K$. Moreover, $(D^2, S^1)^{\sigma}$ is a $T^m$-invariant subset
which is equivariantly contractible to the orbit $(S^1)^{\sigma}$ in $\mathcal{Z}_K$ where
\begin{equation}\label{s1sigma}
(S^1)^{\sigma}=\Big\{ (x_1, \ldots, x_m) \in \mathcal{Z}_K \; : \;  
x_i=0 \;\mbox{if}\;  i \in \sigma , \;\;  |x_i| =1 \; \mbox{if} \; i \notin \sigma \Big\}.
\end{equation}
So we obtain that 
$$  cat_{T^m}(\mathcal{Z}_K) \leq |S|  .$$ 
Note that the set
$$ \Big\{ (S^1)^{\sigma} \; : \; \sigma \in S \;  \Big\}$$ 
is the set of all minimal orbits of $\mathcal{Z}_K$ with respect to $T^m$-action. 
So we have 
$$  cat_{T^m}(\mathcal{Z}_K) \geq |S| .$$
\end{proof}

\begin{prop}\label{moment_gconn}
The moment angle complex $\mathcal{Z}_K$ is $(T^m)$-connected.
\end{prop}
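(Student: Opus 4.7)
The plan is to verify, for each closed subgroup $H \leq T^m$, that the fixed-point set $\mathcal{Z}_K^H$ is path-connected. Writing $\pi_i \colon T^m \to S^1$ for the $i$-th coordinate projection, I set $I := \{i \in [m] : \pi_i(H) \neq \{1\}\}$. Since the stabilizer of $z \in D^2$ under the standard $S^1$-rotation action is $\{1\}$ for $z \neq 0$ and all of $S^1$ for $z = 0$, a coordinate-by-coordinate check gives
$$
\big((D^2)^m\big)^H \;=\; \{x \in (D^2)^m : x_i = 0 \text{ for all } i \in I\}.
$$

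Intersecting with the defining decomposition in \eqref{mont_ang}, I obtain
$$
\mathcal{Z}_K^H \;=\; \bigcup_{\substack{\sigma \in K \\ I \subseteq \sigma}} \big((D^2, S^1)^\sigma\big)^H,
$$
because any $\sigma$ not containing $I$ would force some coordinate $i \in I \setminus \sigma$ to lie simultaneously in $S^1$ and in $\{0\}$. Consequently $\mathcal{Z}_K^H$ is non-empty precisely when $I \in K$; when $I \notin K$ the fixed set is empty, hence vacuously path-connected. For $I \in K$, the projection $(x_j)_{j \in [m]} \mapsto (x_j)_{j \notin I}$ identifies each nonempty piece $((D^2,S^1)^\sigma)^H$ with $(D^2,S^1)^{\sigma \setminus I}$ on the vertex set $[m] \setminus I$, and so
$$
\mathcal{Z}_K^H \;\cong\; \mathcal{Z}_{K'}, \qquad K' := \mathrm{lk}_K(I) = \{\tau \subseteq [m]\setminus I : \tau \cup I \in K\}.
$$

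The proof then reduces to the observation that every moment angle complex is path-connected: since $\emptyset \in K'$, the core torus $(D^2,S^1)^{\emptyset} = (S^1)^{|[m] \setminus I|}$ is contained in every piece $(D^2,S^1)^\tau$ (each of which is a product of disks and circles, hence path-connected), so any two points of $\mathcal{Z}_{K'}$ can be joined by concatenating paths that run into this common central torus. The only delicate step is the coordinate bookkeeping needed to identify $\mathcal{Z}_K^H$ with $\mathcal{Z}_{\mathrm{lk}_K(I)}$; once that is in place, path-connectedness follows immediately from the presence of a torus lying inside every face.
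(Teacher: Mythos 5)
Your proof is correct, and it follows the same basic route as the paper: decompose $\mathcal{Z}_K^H$ using the defining union \eqref{mont_ang} and analyze the pieces $(D^2,S^1)^\sigma$. The difference is one of completeness rather than strategy. The paper's proof is a single sentence --- ``each $(D^2,S^1)^\sigma$ is $T^m$-connected, so the result follows'' --- which glosses over the fact that a union of $G$-connected subspaces need not be $G$-connected; some glue between the pieces is required. You supply exactly that glue: the coordinate-wise computation of $((D^2)^m)^H$ in terms of the support $I$ of $H$, the observation that only the faces $\sigma\supseteq I$ contribute, the resulting identification $\mathcal{Z}_K^H\cong\mathcal{Z}_{\mathrm{lk}_K(I)}$, and the common central torus $(D^2,S^1)^{\emptyset}$ sitting inside every piece, which makes the union path-connected. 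The one point a referee might pause on is your treatment of the case $I\notin K$, where $\mathcal{Z}_K^H=\emptyset$ and you invoke vacuous path-connectedness; this is a matter of convention, but it is the convention the authors themselves use (they regard $S^3$ with the coordinatewise $T^2$-action, which has no fixed points, as $T^2$-connected), so your argument is consistent with the paper. In short, your write-up is a fully rigorous version of the paper's sketch, and the identification of fixed sets with moment angle complexes of links is a reusable observation in its own right.
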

\begin{proof}
Since each $(D^2, S^1)^{\sigma}$ is $(T^m)$-connected, the result 
follows from \eqref{mont_ang}.
\end{proof}

Let ${\bf F}[v_1, \ldots, v_m]$ be the graded polynomial algebra over a filed
$\bf{F}$ with $\deg{(v_j)} =2$ for $j \in \{1, \ldots, m\}$. Then the quotient
ring ${\bf F}(K) = {\bf F}[v_1, \ldots, v_m]/I_K$ is called the {\it Stanley-Reisner ring} of $K$ if $I_K$ is the homogeneous ideal generated by all square-free
monomials $v_{j_1} \cdots v_{j_r}$ ($j_1 < \cdots < j_r$) such that $\{j_1, 
\ldots, j_r\}$ is not a simplex in $K$. We denote the zero-divisor cup length
of the ring ${\rm Tor}_{{\bf F}[v_1, \ldots, v_m]}\big({\bf F}(K), {\bf F}\big)$ by $ZCL(K)$.

\begin{prop}
Let $\{\sigma_1, \ldots, \sigma_s\}$ be the maximal simplices of $K$ on $m$
vertices. Then 
\begin{equation}\label{inequ}
ZCL(K) \leq TC_{T^m,2}(\mathcal{Z}_K) \leq \sum_{i, j=1}^s (k_{ij} +1).
\end{equation} 
where $k_{ij} = \Big| ([m] - \sigma_i) \cap ([m] - \sigma_j) \Big|$.
\end{prop}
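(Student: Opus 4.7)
The proof splits into the two inequalities. For the lower bound $ZCL(K)\leq TC_{T^m,2}(\mathcal{Z}_K)$, the plan is to invoke the Buchstaber--Panov theorem, which identifies the ordinary cohomology ring $H^*(\mathcal{Z}_K;{\bf F})$ with ${\rm Tor}_{{\bf F}[v_1,\ldots,v_m]}({\bf F}(K),{\bf F})$; under this identification $ZCL(K)$ coincides with the zero-divisor cup-length of $H^*(\mathcal{Z}_K;{\bf F})$. Farber's classical cohomological lower bound for topological complexity then gives $ZCL(K)\leq TC(\mathcal{Z}_K)$, and Proposition~\ref{rel_tc_subgp}(2) upgrades this to $TC(\mathcal{Z}_K)\leq TC_{T^m,2}(\mathcal{Z}_K)$.

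For the upper bound, the plan is to exhibit a $T^m$-invariant open cover of $\mathcal{Z}_K\times\mathcal{Z}_K$ of cardinality $\sum_{i,j=1}^s(k_{ij}+1)$ admitting $T^m$-equivariant local sections of $e_2$. First I would replace each $(D^2,S^1)^{\sigma_i}$ by an open $T^m$-invariant neighborhood $\tilde U_i\subset\mathcal{Z}_K$ that $T^m$-equivariantly deformation retracts onto it, so that the $s^2$ products $\tilde U_i\times\tilde U_j$ cover $\mathcal{Z}_K\times\mathcal{Z}_K$. For a fixed ordered pair $(i,j)$ partition $[m]=A_{ij}\sqcup B_{ij}\sqcup C_{ij}\sqcup D_{ij}$ with $A_{ij}=\sigma_i\cap\sigma_j$, $B_{ij}=\sigma_i\setminus\sigma_j$, $C_{ij}=\sigma_j\setminus\sigma_i$ and $|D_{ij}|=k_{ij}$; then $\tilde U_i\times\tilde U_j$ factors $T^m$-equivariantly as a product whose $l$-th factor, acted on only by the $l$-th circle of $T^m$, is $D^2\times D^2$, $D^2\times S^1$, $S^1\times D^2$ or $S^1\times S^1$ according to the part of the partition that contains $l$.

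Next, the sections would be built coordinate-wise by concatenating two paths through an equivariantly chosen intermediate point $z=z(x,y)\in(D^2,S^1)^{\sigma_i\cap\sigma_j}$: set $z_l=\tfrac{1}{2}(x_l+y_l)$ for $l\in A_{ij}$, $z_l=y_l$ for $l\in B_{ij}$, $z_l=x_l$ for $l\in C_{ij}$, and choose $z_l\in S^1$ for $l\in D_{ij}$ via the toral motion planner below. The concatenated path $x\to z\to y$ stays in $\mathcal{Z}_K$ because its first half has support contained in $\sigma_i$ and its second in $\sigma_j$, both simplices of $K$. For $l\in A_{ij}\cup B_{ij}\cup C_{ij}$ both halves are straight-line homotopies in $D^2$, which are $T^1$-equivariant since $T^1$ acts linearly by rotation. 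The only nontrivial contribution is the $D_{ij}$-factor, where $(S^1)^{k_{ij}}$ carries a free diagonal $T^{k_{ij}}$-action, and Proposition~\ref{free_act} gives $TC_{T^{k_{ij}},2}((S^1)^{k_{ij}})=cat((S^1)^{k_{ij}})=k_{ij}+1$, producing a cover of $(S^1\times S^1)^{D_{ij}}$ by $k_{ij}+1$ open sets with equivariant motion planners. Thus each $\tilde U_i\times\tilde U_j$ is covered by $k_{ij}+1$ sets with $T^m$-equivariant sections of $e_2$, and summing over the $s^2$ ordered pairs gives the upper bound.

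The principal obstacle is achieving the sharp count $k_{ij}+1$ on the $D_{ij}$-factor rather than the naive $2^{k_{ij}}$ that a purely one-circle-at-a-time construction would give; this requires the classical Farber packing of short-way and long-way circle motions into $k_{ij}+1$ open sets, carried out while preserving $T^m$-equivariance. The simpler check that the concatenated coordinate-wise path always has simplex-valued support is handled automatically by routing through $(D^2,S^1)^{\sigma_i\cap\sigma_j}$, so the technical heart of the argument is the equivariant toral motion planner supplied by Proposition~\ref{free_act} and its compatibility with the full $T^m$-action.
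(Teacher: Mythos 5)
Your proof of the lower bound is exactly the paper's: the Buchstaber--Panov isomorphism $H^{\ast}(\mathcal{Z}_K)\cong {\rm Tor}_{{\bf F}[v_1,\ldots,v_m]}({\bf F}(K),{\bf F})$, Farber's zero-divisor cup-length bound, and $TC(\mathcal{Z}_K)\leq TC_{T^m,2}(\mathcal{Z}_K)$. For the upper bound you take a genuinely different route. The paper first applies Proposition \ref{rel_tc_catg} (using Proposition \ref{moment_gconn}, that $\mathcal{Z}_K$ is $T^m$-connected) to get $TC_{T^m,2}(\mathcal{Z}_K)\leq cat_{T^m}(\mathcal{Z}_K\times\mathcal{Z}_K)$, covers the square by the $s^2$ blocks $(D^2,S^1)^{\sigma_i}\times(D^2,S^1)^{\sigma_j}$, contracts each equivariantly onto the single-orbit-type set $(S^1)^{\sigma_i}\times(S^1)^{\sigma_j}$ with orbit space $(S^1)^{k_{ij}}$, and invokes Lemma \ref{cat_gcat} to charge $k_{ij}+1$ per block. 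You instead build explicit diagonal-$T^m$-equivariant motion planners on each block, splitting coordinates into the four parts $A_{ij},B_{ij},C_{ij},D_{ij}$, routing through $(D^2,S^1)^{\sigma_i\cap\sigma_j}$, and letting Proposition \ref{free_act} supply the $k_{ij}+1$ equivariant planners on the free toral factor $(S^1)^{D_{ij}}$. The combinatorial skeleton is identical ($s^2$ blocks, each costing $k_{ij}+1$, with $k_{ij}$ counting the circle directions shared by the two factors), and both arguments ultimately rest on the same principle that a free or one-orbit-type torus direction contributes only the ordinary category of its orbit space. What your version buys: it bypasses the $G$-connectedness hypothesis and the detour through $cat_{T^m}$ of the product, and by thickening $(D^2,S^1)^{\sigma_i}$ to an invariant open neighbourhood $\tilde U_i$ you address a point the paper glosses over, namely that these subsets are closed rather than open in $\mathcal{Z}_K$ when $K$ has more than one maximal simplex. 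What it costs: you must check by hand that the concatenated coordinatewise paths remain in $\mathcal{Z}_K$ and are equivariant for the diagonal action, and that the $k_{ij}+1$ sets from the toral planner pull back to invariant open sets of $\mathcal{Z}_K\times\mathcal{Z}_K$; your sketch of these verifications is correct.
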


\begin{proof}
By  \cite[Theorem 7.6]{BP}, we have the following ring isomorphism
$$ H^{\ast}(\mathcal{Z}_K)  \cong {\rm Tor}_{{\bf F}[v_1, \ldots, v_m]}({\bf F}(K), {\bf F}) .$$
Therefore, the left inequality in \eqref{inequ} follows from \cite[Theorem 7]{Far}
and the natural fact that $TC(\mathcal{Z}_K) \leq TC_{T^m,2}(\mathcal{Z}_K)$.

By Proposition \ref{moment_gconn} and \ref{rel_tc_catg}, we have
$TC_{T^m, 2}(\mathcal{Z}_K) \leq cat_{T^m}(\mathcal{Z}_K \times \mathcal{Z}_K)$. 
Note that $$\{(D^2, S^1)^{\sigma_i} \times (D^2, S^2)^{\sigma_j} ~|~ i, j \in
\{1, \ldots, s\}\}$$ is $(T^m \times T^m)$-invariant (and hence $T^m$-invariant) open
cover of $\mathcal{Z}_K \times \mathcal{Z}_K$. Each open set $(D^2, S^1)^{\sigma_i}
\times (D^2, S^2)^{\sigma_j}$ is $T^m$-contractible to $(S^1)^{\sigma_i} \times
(S^1)^{\sigma_j}$ where $(S^1)^{\sigma}$ is defined in \eqref{s1sigma}. Since
$T^m$ acts on $(S^1)^{\sigma_i} \times (S^1)^{\sigma_j}$ diagonally as group
operation, then the orbit types are same and the 
corresponding orbit space $((S^1)^{\sigma_i} \times (S^1)^{\sigma_j})/T^m$ is
homeomorphic to $(S^1)^{k_{ij}}$ for $i, j \in \{1, \ldots, s\}$. So by
Lemma \ref{cat_gcat}, $$cat_{T^m}((S^1)^{\sigma_i} \times (S^1)^{\sigma_j})
=cat (S^1)^{k_{ij}} = k_{ij} +1$$ for $i, j \in \{1, \ldots, s\}$. Therefore
$cat_{T^m}(\mathcal{Z}_K \times \mathcal{Z}_K) \leq \displaystyle \sum_{i, j=1}^s (k_{ij}+1)$.
\end{proof}


{\bf Acknowledgement.} 
The research of the first author was supported by the University of Regina, the Atlantic Association for Research in the Mathematical Sciences (AARMS), the Natural Sciences and Engineering Research Council of Canada (NSERC) and by the Air Force Office of
Scientific Research, Air Force Material Command, USAF under Award
No. FA9550-15-1-0331.
The second author was supported by the Pacific Institute for Mathematical Sciences (PIMS), the University of Regina
and by the University of Calgary.



%

\end{document}